\newcommand{\D}{\ensuremath{\mathcal D}}
\newcommand{\E}{\ensuremath{\mathbb E}\xspace}
\newcommand{\X}{\ensuremath{\mathbb X}\xspace}
\newcommand\nats{\hbox{$I \kern - .38em N$}} %Natural numbers
\newcommand\ints{\hbox{$Z \kern - .65em Z$}} %Integers
\title{Reverse Tangent Categories} %TODO Please add
\author{Geoffrey Cruttwell}{Mount Allison University, Canada \and \url{https://www.reluctantm.com/gcruttw/}}{gcruttwell@mta.ca}{https://orcid.org/0000-0001-8742-6263}{Partially funded by an NSERC Discovery Grant}%TODO mandatory, please use full name; only 1 author per \author macro; first two parameters are mandatory, other parameters can be empty. Please provide at least the name of the affiliation and the country. The full address is optional. Use additional curly braces to indicate the correct name splitting when the last name consists of multiple name parts.
\author{Jean-Simon Pacaud Lemay\footnote{Corresponding Author}}{Macquarie University, Australia \and \url{https://sites.google.com/view/jspl-personal-webpage/}}{js.lemay@mq.edu.au}{https://orcid.org/0000-0003-4124-3722}{For this research, this author was funded by an NSERC PDF (456414649), an ARC DECRA (DE230100303), \& an AFOSR Research Grant (FA9550-24-1-0008)}
\authorrunning{G. Cruttwell and J.-S. P. Lemay} %TODO mandatory. First: Use abbreviated first/middle names. Second (only in severe cases): Use first author plus 'et al.'
\keywords{Tangent Categories, Reverse Tangent Categories, Reverse Differential Categories, Categorical Machine Learning}
\begin{document}
%Authors are invited to submit contributed papers of no more than 15 pages in LIPIcs style (not including references),

\maketitle

%TODO mandatory: add short abstract of the document
\begin{abstract}
Previous work has shown that reverse differential categories give an abstract setting for gradient-based learning of functions between Euclidean spaces.  However, reverse differential categories are not suited to handle gradient-based learning for functions between more general spaces such as smooth manifolds.  In this paper, we propose a setting to handle this, which we call \emph{reverse tangent categories}: tangent categories with an involution operation for their differential bundles. 
\end{abstract}

\section{Introduction}

This paper is a direct follow-up to the paper ``Reverse Differential Categories'', published in CSL 2020 \cite{reverse}, and continues a tradition of developing categorical structures to help understand and work with ideas from differential calculus in computer science, and specifically here in relation to machine learning and automatic differentiation \cite{blute2009cartesian, cockett2014differential, cat_ML, cruttwell2022monoidal, reverse, reverse_ascent, reverse_semantics}. 

Initial work on categorical formulations of differential structures in computer science focused on the so-called ``forward'' derivative. Given a map $f: A \to B$, the forward derivative is an operation that sends tangent vectors in $A$ to tangent vectors in $B$.  While there are several different (but related) categorical formulations for the forward derivative, the relevant ones for this paper are \emph{Cartesian differential categories} \cite{blute2009cartesian} and \emph{tangent categories} \cite{cockett2014differential}. 

Cartesian differential categories formalize differential calculus over Euclidean spaces. A Cartesian differential category (Ex \ref{ex:CDC}) comes equipped with a \emph{differential combinator} $\mathsf{D}$, which is an operation that for any map $f: A \to B$, produces a map $\mathsf{D}[f]: A \times A \to B$, called the derivative of $f$. Various axioms are then demanded of $\mathsf{D}$ which enforce the properties of ordinary differentiation, such as the chain rule, symmetry of mixed partial derivatives, etc. Intuitively, one considers an input $(a,v) \in A \times A$ as a point $a$ and a tangent vector $v$ to $a$, and so the derivative $\mathsf{D}[f]$ produces a tangent vector to $f(a)$ in $B$. 

However, the definition of a Cartesian differential category assumes that a tangent vector to a point is of the same type as the point. While this is true for Euclidean spaces, this is certainly not true for more general spaces such as arbitrary smooth manifolds.  To capture the operation of the forward derivative for such spaces, one can instead work with tangent categories, which formalize differential calculus over smooth manifolds. A tangent category (Def \ref{definition:tangent-category}) in particular comes equipped with an endofunctor $\mathsf{T}$, where we think of $\mathsf{T}(A)$ as the collection of all tangent vectors to points of an object $A$. As such, we may interpret $\mathsf{T}(A)$ as the abstract tangent bundle of $A$. For a map $f: A \to B$, the associated map $\mathsf{T}(f): \mathsf{T}(A) \to \mathsf{T}(A)$ is interpreted as an operation which takes a tangent vector in $A$ to a tangent vector in $B$.  This tangent bundle functor also comes equipped with various additional structure that captures ordinary properties of differentiation -- for example, functoriality of $\mathsf{T}$ corresponds to the chain rule.  Tangent categories are a direct generalization of Cartesian differential categories which allow one to work with the forward derivative in more general settings.    

However, many areas of computer science such as automatic differentiation and gradient-based learning make much more extensive use of the ``reverse'' derivative.  Given a map $f: A \to B$, the reverse derivative is an operation which sends tangent vectors in $B$ to tangent vectors in $A$.  The reverse derivative is much more computationally efficient for maps between Euclidean spaces in which the domain space is much larger than the codomain space -- which is the typical case in machine learning scenarios. The paper ``Reverse Derivative Categories'' \cite{reverse} introduced \emph{Cartesian reverse differential categories}, which provide a categorical abstraction of the reverse derivative operation over Euclidean spaces. This time, a Cartesian reverse differential category (Ex \ref{ex:CRDC}) comes equipped with a \emph{reverse differential combinator} $\mathsf{R}$ which now takes a map $f: A \to B$ and produces a map of type $\mathsf{R}[f]: A \times B \to A$, called the reverse derivative of $f$. Intuitively, one thinks of $\mathsf{R}[f]$ as taking a point of the domain and a tangent vector of the \emph{codomain} and returning a tangent vector of the domain at the point. 

However, Cartesian reverse differential categories suffer from the same problem as their forward counterpart: they assume that the tangent vectors of a point in the space are in 1-1 correspondence with points of the space itself. The objective of this paper is to do what tangent categories did for Cartesian differential categories: introduce a setting where one has a reverse derivative operation, but in such a way that tangent vectors are not assumed to be the same as points of the space itself. As such, the main contribution of this paper is the introduction of \emph{reverse tangent categories} (Sec \ref{sec:RTC}), the reverse differential counterpart of tangent categories. 

\begin{center}
\begin{tabular}{ |c|c|c| } 
 \hline
& Euclidean Spaces & Manifolds \\  \hline
 Forward derivative & Cartesian differential category & Tangent category \\ \hline 
 Reverse derivative & Cartesian reverse differential category & \textbf{Reverse tangent category} \\ 
 \hline
\end{tabular}
\end{center}

How does one go about defining a reverse tangent category? Our first attempt at defining ``reverse tangent categories'' was similar to a Cartesian reverse differential category, that is, trying to give a direct description of what this structure should look like in terms of a reverse differentiation operation. However, this has proven difficult (see Remark \ref{rem:reverse_functor}). So, instead, here we take a different approach.  Usefully, Cartesian reverse differential categories have an alternative characterization.  A Cartesian reverse differential category is precisely a Cartesian differential category equipped with a ``linear dagger'', which is an involution operation on linear maps. This linear dagger $\dagger$ is an operation which takes a map of type $f: C \times A \to B$ which is ``linear in $A$'' and transposes the linear argument to produce a map $f^{\dagger}: C \times B \to A$ which is now ``linear in $B$''. From this point of view, the reverse differential combinator is the transpose of the forward differential combinator, that is, $\mathsf{R}[f] := \mathsf{D}[f]^{\dagger}$. Using this approach as a guide, we define a reverse tangent category as a tangent category with a suitable notion of involution.  

What form should such an involution for a reverse tangent category take? One way to look at the dagger operation of a Cartesian reverse differential category is via fibrations. Indeed, the dagger operation can be viewed as an operation which goes from the canonical ``linear fibration'' \cite{reverse, cruttwell2022monoidal} of a Cartesian differential category to its dual fibration. The notion of a dual fibration \cite{jacobs1999categorical} is an operation which takes a fibration and returns another fibration in which the fibre over $A$ is the opposite category of the fibre over $A$ from the original fibration. The analog of the linear fibration in a tangent category is a fibration of differential bundles \cite{cockett2018differential} -- these abstract the notion of smooth vector bundles from ordinary differential geometry. Thus, we define a reverse tangent category (Def \ref{definition:reverse-tangent}) to consist of a tangent category equipped with an involution operation which goes from a fibration of differential bundles (Def \ref{def:systemdbun}) to its dual fibration (Prop \ref{prop:dbundual}).  

With this definition in hand, we then (i) give examples of such structure including smooth manifolds (Ex \ref{ex:sman-rev}), but also examples from algebra (Ex \ref{ex:cring-rev}) and algebraic geometry (Ex \ref{ex:affine-rev}), (ii) show precisely how this definition relates to Cartesian reverse differential categories (Ex \ref{ex:CRDC} \& Prop \ref{prop:CRTC-CRDC}), and (iii) provide some theoretical results about reverse tangent categories (Sec \ref{sec:theory}).  Thus, the purpose of this paper is to properly introduce reverse tangent categories; we do not here consider applications of reverse tangent categories to gradient-based learning on (smooth) manifolds, or the relationship of these ideas to differential programming languages (for reverse cartesian differential categories, however, see \cite{reverse_semantics}). That said, just as the original paper on Cartesian reverse differential categories inspired work on the use of such structures in gradient-based learning for Euclidean spaces \cite{reverse_ascent, cat_ML, reverse_semantics}, reverse tangent categories should provide a suitable setting to do the same for smooth manifolds (or any other ``differential'' setting).  

\textbf{Conventions:} 
In an arbitrary category, we write objects as capital letters $A,B$, etc. and maps with lower letters as $f: A \to B$. We denote identity maps as $1_A: A \to A$, and, following the conventions of previous differential/tangent category papers, we write composition diagrammatically, that is, the composite of $f: A \to B$ and $g: B \to C$ is denoted $fg: A \to C$.

\section{Forward Tangent Categories and Differential Bundles}\label{sec:tan_cats}

In this section, we review the basics of tangent categories including the definition, some key examples, and differential bundles, which will play an important role in this paper. 

\begin{definition}\label{definition:tangent-category} \cite[Def 2.3]{cockett2014differential} A \textbf{tangent structure} on a category $\mathbb{X}$ is a sextuple $\mathbb{T} := (\mathsf{T}, \mathsf{p}, \mathsf{s}, \mathsf{z}, \ell, \mathsf{c})$ consisting of: 
\begin{enumerate}[{\em (i)}]
\item An endofunctor $\mathsf{T}: \mathbb{X} \to  \mathbb{X}$, called the \textbf{tangent bundle functor};
\item A natural transformation $\mathsf{p}_A: \mathsf{T}(A) \to A$, called the \textbf{projection}, such that for each $n\in \mathbb{N}$, the $n$-fold pullback\footnote{By convention, $\mathsf{T}_0(A) = A$ and $\mathsf{T}_1(A) = \mathsf{T}(A)$} of $\mathsf{p}_A$ exists, denoted as $\mathsf{T}_n(A)$ with projections ${\rho_j: \mathsf{T}_n(A) \to \mathsf{T}(A)}$, and such that for all $m \in \mathbb{N}$, $\mathsf{T}^m$ preserves these pullbacks, that is, $\mathsf{T}^m( \mathsf{T}_n(A))$ is the $n$-fold pullback of $\mathsf{T}^m(\mathsf{p}_A)$ with projections $\mathsf{T}^m(\rho_j)$;  
\item A natural transformation\footnote{Note that by the universal property of the pullback, we can define functors $\mathsf{T}_n: \mathbb{X} \to \mathbb{X}$.} $\mathsf{s}_A: \mathsf{T}_2(A) \to \mathsf{T}(A)$, called the \textbf{sum};
\item A natural transformation $\mathsf{z}_A: A \to \mathsf{T}(A)$, called the \textbf{zero map};
\item A natural transformation $\ell_A: \mathsf{T}(A) \to \mathsf{T}^2(A)$, called the \textbf{vertical lift};
\item A natural transformation $\mathsf{c}_A: \mathsf{T}^2(A) \to \mathsf{T}^2(A)$, called the \textbf{canonical flip}; 
\end{enumerate}
such that the equalities and universal property in \cite[Def 2.3]{cockett2014differential} are satisfied. A \textbf{tangent category} is a pair $(\mathbb{X}, \mathbb{T})$ consisting of a category $\mathbb{X}$ equipped with a tangent structure $\mathbb{T}$. 
\end{definition}

Let us briefly provide some intuition for the definition of a tangent category. Tangent categories formalize the properties of the tangent bundle on smooth manifolds from classical differential geometry. As such, an object $A$ in a tangent category can be interpreted as a base space, and $\mathsf{T}(A)$ as its abstract tangent bundle. For maps, $\mathsf{T}(f)$ is interpreted as the differential of $f$, and the functoriality of $\mathsf{T}$ represents the chain rule. The projection $\mathsf{p}_A$ is the analogue of the natural projection from the tangent bundle to its base space, making $\mathsf{T}(A)$ an abstract fibre bundle over $A$. The sum $s_A$ and the zero $\mathsf{z}_A$ make $\mathsf{T}(A)$ into an additive bundle over $A$; that is, a commutative monoid in the slice category\footnote{Commutative monoids in the slice category are also called additive bundles \cite[Sec 2.1]{cockett2014differential}.} over $A$. To explain the vertical lift, recall that in differential geometry, the double tangent bundle (i.e. the tangent bundle of the tangent bundle) admits a canonical sub-bundle called the vertical bundle which is isomorphic to the tangent bundle. The vertical lift $\ell_A$ is an analogue of the embedding of the tangent bundle into the double tangent bundle via the vertical bundle. The vertical lift also satisfies a universal property, which is essential to generalize important properties of the tangent bundle from differential geometry. Lastly, the canonical flip $\mathsf{c}_A$ is an analogue of the smooth involution of the same name on the double tangent bundle, and its naturality captures the symmetry of mixed partial derivatives ($\frac{\partial f}{\partial x \partial y} = \frac{\partial f}{\partial y \partial x}$). For more details and intuition on tangent categories, see \cite[Sec 2.5]{cockett2014differential}. 

We now recall the main examples of tangent categories we will use throughout the paper. In particular, Ex \ref{example:differential-geometry} (which is arguably the canonical example of a tangent category) and Ex \ref{ex:smooth} directly relate tangent categories to differential geometry and differential calculus, Ex \ref{example:algebra} (one of the main examples in Rosický's original paper \cite[Ex 2]{rosicky1984abstract}) provides a link to commutative algebra, and Ex \ref{example:affine} relates tangent categories to algebraic geometry. For other examples of tangent categories, see \cite[Ex 2.2]{cockett2018differential}. 

\begin{example} \label{example:differential-geometry} Let $\mathsf{SMAN}$ be the category whose objects are smooth manifolds and whose maps are smooth functions. For a smooth manifold $M$ and a point $x \in M$, let $\mathsf{T}_x(M)$ be the tangent space to $M$ at $x$, and recall that the tangent bundle of $M$ is the smooth manifold $\mathsf{T}(M)$ which is the (disjoint) union of each tangent space:
\[\mathsf{T}(M) := \bigsqcup \limits_{x \in M}  \mathsf{T}_x(M)\]
So in local coordinates, elements of $\mathsf{T}(M)$ can be described as pair ${(x, \vec v) \in \mathsf{T}(M)}$ consisting of a point $x \in M$ and a tangent vector $\vec v$ at $x$. Now for a smooth function $f: M \to N$ and a point $x \in M$, there is an induced linear map ${\mathsf{T}_x(f): \mathsf{T}_x(M) \to \mathsf{T}_{f(x)}(N)}$ called the derivative of $f$ at $x$. This induces a functor $\mathsf{T}: \mathsf{SMAN} \to \mathsf{SMAN}$ which sends a smooth manifold $M$ to its tangent bundle $\mathsf{T}(M)$, and a map $f: M \to N$ to the map $\mathsf{T}(f): \mathsf{T}(M) \to \mathsf{T}(N)$, locally defined as:
\[\mathsf{T}(f)(x, \vec v) = \left( f(x), \mathsf{T}_x(f)(\vec v) \right)\] 
 To describe the rest of the tangent structure, in local coordinates, elements of $\mathsf{T}_2(M)$ are triples $(x, \vec v, \vec w) \in \mathsf{T}_2(M)$ where $x \in M$ and $\vec v, \vec w \in \mathsf{T}_x(M)$, while elements of $\mathsf{T}^2(M)$ can be represented as quadruples $(x, \vec v, \vec w, \vec u) \in \mathsf{T}^2(M)$. Thus the natural transformations are defined as follows in local coordinates: 
  \begin{gather*}
 \mathsf{p}_M(x, \vec v) = x \quad \quad \quad  \mathsf{s}_M(x, \vec v, \vec w) = (x, \vec v + \vec w) \quad \quad \quad  \mathsf{z}_{M}(x) = ( x, \vec 0) \\
 \ell_{M}(x, \vec v) = (x, \vec 0, \vec 0, \vec v) \quad \quad \quad  \mathsf{c}_{M}(x, \vec v, \vec w, \vec u) = (x, \vec w, \vec v, \vec u)  \end{gather*}
 So $(\mathsf{SMAN}, \mathbb{T})$ is a tangent category \cite[Ex 2.2.i]{cockett2018differential}.  
\end{example}

\begin{example}\label{ex:CDC} Every Cartesian differential category is a tangent category \cite[Prop 4.7]{cockett2014differential}. Where tangent categories formalize differential calculus over smooth manifolds, Cartesian differential categories instead formalize differential calculus over Euclidean spaces. Briefly, a Cartesian differential category \cite[Def 2.1.1]{blute2009cartesian} is a category $\mathbb{X}$ with finite products (where we denote the binary product by $\times$, projections $\pi_j$, and pairing operator $\langle -,- \rangle$) which comes equipped with a \textbf{differential combinator} $\mathsf{D}$ which associates to every map $f: A \to B$ a map $\mathsf{D}[f]: A \times A \to B$, called the derivative of $f$. The differential combinator $\mathsf{D}$ satisfies seven axioms which are analogues of fundamental identities of the derivative, such as the chain rule, linearity of the derivative, etc. The canonical tangent functor of a Cartesian differential category is defined as follows:
\begin{align*}
    \mathsf{T}(A) := A \times A && \mathsf{T}(f) := \left \langle \pi_0 f , \mathsf{D}[f] \right \rangle
\end{align*}
and where the rest of the tangent structure is defined in \cite[Prop 4.7]{cockett2014differential}. For more details on Cartesian differential categories, as well as examples, see \cite{blute2009cartesian, reverse}.
\end{example}

\begin{example}\label{ex:smooth} As an explicit example of a Cartesian differential category, let $\mathsf{SMOOTH}$ be the category whose objects are the Euclidean real vector spaces $\mathbb{R}^n$, and whose maps are smooth functions ${F: \mathbb{R}^n \to \mathbb{R}^m}$ between them. $\mathsf{SMOOTH}$ is a Cartesian differential category, where for a smooth function $F = \langle f_1, \hdots, f_m \rangle: \mathbb{R}^n \to \mathbb{R}^m$, its derivative ${\mathsf{D}[F]: \mathbb{R}^n \times \mathbb{R}^n \to \mathbb{R}^m}$ is defined as the $m$-tuple of sums of the partial derivatives:
\[\mathsf{D}[F](\vec x, \vec y) := \left \langle \sum \limits^n_{i=1} \frac{\partial f_1}{\partial x_i}(\vec x) y_i, \hdots, \sum \limits^n_{i=1} \frac{\partial f_n}{\partial x_i}(\vec x) y_i \right \rangle\] 
Then by the previous example, $\mathsf{SMOOTH}$ is a tangent category; its tangent structure is precisely the one for smooth manifolds in Ex \ref{example:differential-geometry}, but restricted to the Euclidean spaces $\mathbb{R}^n$. 
\end{example}

 \begin{example} \label{example:algebra} Let $k$ be a commutative ring, and $k\text{-}\mathsf{CALG}$ be the category of commutative $k$-algebras. For a commutative $k$-algebra $A$, we denote its algebra of dual numbers as:
 \[A[\epsilon] = \lbrace a + b \epsilon \vert~ a,b \in A, \epsilon^2 = 0\rbrace\] 
 Then, borrowing notation from \cite{cruttwellLemay:AlgebraicGeometry}, we have a functor $\rotatebox[origin=c]{180}{$\mathsf{T}$}: k\text{-}\mathsf{CALG} \to k\text{-}\mathsf{CALG}$ which is defined on objects as $\rotatebox[origin=c]{180}{$\mathsf{T}$}(A) := A[\epsilon]$ and for a map $f: A \to B$, ${\rotatebox[origin=c]{180}{$\mathsf{T}$}(f): A[\epsilon] \to B[\epsilon]}$ is defined as:
 \[\rotatebox[origin=c]{180}{$\mathsf{T}$}(f)(a + b\epsilon) = f(a) + f(b) \epsilon\]  
 This gives us our tangent bundle and so $(k\text{-}\mathsf{CALG}, \rotatebox[origin=c]{180}{$\mathbb{T}$})$ is a tangent category, where the remaining tangent structure is defined in \cite[Sec 3.1]{cruttwellLemay:AlgebraicGeometry}. We also note that this example can be generalized: for any symmetric monoidal category with distributive finite biproducts, its category of commutative monoids is a tangent category by generalizing the dual numbers construction. 
 \end{example}

\begin{example} \label{example:affine} For a commutative ring $k$, the category of affine schemes over $k$ is a tangent categories \cite[Sec 4]{cruttwellLemay:AlgebraicGeometry}. Famously, the category of affine schemes over $k$ is equivalent to $k\text{-}\mathsf{CALG}^{op}$, so we may describe the tangent structure in terms of commutative $k$-algebras. For a commutative $k$-algebra $A$, we denote its module of Kähler differentials as $\Omega(A)$. Then define the ``fibré tangente'' (French for tangent bundle) of $A$, or tangent algebra of $A$ \cite[Section 2.6]{jubin2014tangent}, as the free symmetric $A$-algebra over its modules of Kähler differentials 
\[ \mathsf{T}(A) := \mathsf{Sym}_A \left( \Omega(R) \right) = \bigoplus \limits_{n=0}^{\infty} \Omega(A)^{{\otimes^s_A}^n} = A \oplus \Omega(A) \oplus \left( \Omega(A) \otimes^s_A \Omega(A) \right) \oplus \hdots \]
where $\bigoplus$ is the coproduct of modules, and $\otimes^s_R$ is the symmetrized tensor product over $A$. Equivalently, $\mathsf{T}(A)$ is the $A$-algebra generated by the set $\lbrace \mathsf{d}(a) \vert~ a \in R \rbrace$ modulo the equations:
\begin{align*}
  \mathsf{d}(1) = 0 && \mathsf{d}(a+b) = \mathsf{d}(a) + \mathsf{d}(b) && \mathsf{d}(ab) = a \mathsf{d}(b) + b \mathsf{d}(a)
\end{align*}
which are the same equations that are modded out to construct the module of Kähler differentials of $A$. Thus, an arbitrary element of $\mathsf{T}(A)$ is a finite sum of monomials of the form $a \mathsf{d}(b_1) \hdots \mathsf{d}(b_n)$, and so the algebra structure of $\mathsf{T}(A)$ is essentially the same as polynomials. This also induces a functor $\mathsf{T}: k\text{-}\mathsf{CALG} \to k\text{-}\mathsf{CALG}$ which maps a commutative $k$-algebra to its tangent algebra $\mathsf{T}(A)$, and a $k$-algebra morphism $f: A \to B$ to the $k$-algebra morphism $\mathsf{T}(f): \mathsf{T}(A) \to \mathsf{T}(B)$ defined on generators as follows: 
\begin{align*}
    \mathsf{T}(f)(a) = f(a) && \mathsf{T}(f)(\mathsf{d}(a)) = \mathsf{d}(f(a))
\end{align*}
This gives us our tangent bundle and so $(k\text{-}\mathsf{CALG}^{op}, \mathbb{T})$ is a tangent category, where the remaining tangent structure is defined in \cite[Sec 4.1]{cruttwellLemay:AlgebraicGeometry}. 
 \end{example}

 There are many concepts from differential geometry which one can define in an arbitrary tangent category.  The concept that plays a crucial role in the definition of reverse tangent categories is that of a \textbf{differential bundle}, which generalizes the idea of a smooth vector bundle from differential geometry. 
 
\begin{definition} \label{def:differentialbundle} \cite[Def 2.3]{cockett2018differential} In a tangent category $(\mathbb{X}, \mathbb{T})$, a \textbf{differential bundle} is a quadruple ${\mathcal{E} = (\mathsf{q}: E \to A, \sigma: E_2 \to E, \zeta: A \to E, \lambda: E \to \mathsf{T}(E))}$ consisting of: 
\begin{enumerate}[{\em (i)}]
\item Objects $A$ and $E$ of $\mathbb{X}$;
\item A map ${\mathsf{q}: E \to A}$ of $\mathbb{X}$, called the \textbf{projection}, such that for each $n \in \mathbb{N}$, the pullback of $n$ copies of $\mathsf{q}$ exists; we denote this pullback as $E_n$ with $n$ projection maps $\pi_j: E_n \to E$, and for all $m \in \mathbb{N}$, $\mathsf{T}^m$ preserves these pullbacks;
\item A map $\sigma: E_2 \to E$ of $\mathbb{X}$, called the \textbf{sum};
\item A map $\zeta: A \to E$ of $\mathbb{X}$, called the \textbf{zero};
\item A map $\lambda: E \to \mathsf{T}(E)$ of $\mathbb{X}$, called the \textbf{lift}; 
\end{enumerate}
such that the equalities and universal property in  \cite[Def 2.3]{cockett2018differential} are satisfied. When there is no confusion, differential bundles will be written as $\mathcal{E} = (\mathsf{q}: E \to A, \sigma, \mathsf{z}, \lambda)$, and when the objects are specified simply as $\mathcal{E} = (\mathsf{q}, \sigma, \zeta, \lambda)$. If $\mathcal{E} = (\mathsf{q}: E \to A, \sigma, \mathsf{z}, \lambda)$ is a differential bundle, we also say that $\mathcal{E}$ is a \textbf{differential bundle over $A$}. 
\end{definition}

If $\mathcal{E} = (\mathsf{q}: E \to A, \sigma, \mathsf{z}, \lambda)$ is a differential bundle, the object $A$ is interpreted as a base space and the object $E$ as the total space. The projection $\mathsf{q}$ is the analogue of the bundle projection from the total space to the base space, making $E$ an ``abstract bundle over $A$''. The sum $\sigma$ and the zero $\zeta$ make each fibre into a commutative monoid. Lastly, the lift $\lambda$ is an analogue of the embedding of the total space into its tangent bundle  (sometimes called the \emph{small vertical lift}).

There are two possible kinds of morphism between differential bundles: one where the base objects can vary and one where the base object is fixed. The former is used as the maps in the category of all differential bundles in the tangent category. In either case, a differential bundle morphism is asked to preserve the projections and the lifts of the differential bundles. 

\begin{definition} \label{def:differentialbundlemorph} \cite[Def 2.3]{cockett2018differential} In a tangent category $(\mathbb{X}, \mathbb{T})$,
\begin{enumerate}[{\em (i)}]
\item A \textbf{linear differential bundle morphism} $(f,g): \mathcal{E} = (\mathsf{q}: E \to A, \sigma, \zeta, \lambda) \to \mathcal{E}^\prime = (\mathsf{q}^\prime: E^\prime \to A^\prime, \sigma^\prime, \zeta^\prime, \lambda^\prime)$ 
is a pair of maps $f: E \to E^\prime$ and $g: A \to A^\prime$ such that the following diagrams commute: 
   \begin{equation}\label{diffbunmap}\begin{gathered} 
   \xymatrixcolsep{5pc}\xymatrix{ E \ar[r]^-{f} \ar[d]_-{\mathsf{q}} & E^\prime \ar[d]^-{\mathsf{q}^\prime} &  E \ar[d]_-{\lambda} \ar[r]^-{f}  & E^\prime  \ar[d]^-{\lambda^\prime}    \\
 A \ar[r]_-{g} & A^\prime & \mathsf{T}(E) \ar[r]_-{\mathsf{T}(f)}  & \mathsf{T}(E^\prime)}  \end{gathered}\end{equation} 
 Let $\mathsf{DBun}\left[(\mathbb{X}, \mathbb{T}) \right]$ be the category whose objects are differential bundles in $(\mathbb{X}, \mathbb{T})$, and whose maps are linear differential bundle morphisms between them.
\item A \textbf{linear $A$-differential bundle morphism} $f: \mathcal{E} = (\mathsf{q}: E \to A, \sigma, \zeta, \lambda) \to \mathcal{E}^\prime = (\mathsf{q}^\prime: E^\prime \to A, \sigma^\prime, \zeta^\prime, \lambda^\prime)$ is a map ${f: E \to E^\prime}$ such that $(f,1_A): \mathcal{E} \to \mathcal{E}^\prime$ is a differential bundle morphism, that is, the following diagrams commute: 
   \begin{equation}\label{Adiffbunmap}\begin{gathered} 
   \xymatrixcolsep{5pc}\xymatrix{ E \ar[r]^-{f} \ar[dr]_-{\mathsf{q}} & E^\prime \ar[d]^-{\mathsf{q}^\prime} &  E \ar[d]_-{\lambda} \ar[r]^-{f}  & E^\prime  \ar[d]^-{\lambda^\prime}    \\
& A  & \mathsf{T}(E) \ar[r]_-{\mathsf{T}(f)}  & \mathsf{T}(E^\prime)}  \end{gathered}\end{equation} 
Let $\mathsf{DBun}\left[ A \right]$ denote the subcategory of differential bundles over $A$ and linear $A$-differential bundle morphisms between them. 
 \end{enumerate}
\end{definition}

One does not need to assume that linear differential bundle morphisms preserve the sum and zero since, surprisingly, this follows from preserving the lift \cite[Prop 2.16]{cockett2018differential}. Other properties of differential bundle morphisms can be found in \cite[Sec 2.5]{cockett2018differential}. Here are some examples of differential bundles and morphisms between them: 

\begin{example} In a tangent category $(\mathbb{X}, \mathbb{T})$, for any object $A$, its tangent bundle is a differential bundle over $A$, that is, $\mathcal{T}(A) := (\mathsf{p}_A: \mathsf{T}(A) \to A, \mathsf{s}_A, \mathsf{z}_A, \ell_A)$ is a differential bundle over $A$, and for every map $f: A \to B$, $\mathcal{T}(f) := (f, \mathsf{T}(f)): \mathcal{T}(A) \to \mathcal{T}(B)$ is a linear differential bundle morphism \cite[Ex 2.4]{cockett2018differential}. As such, we obtain a functor $\mathcal{T}: \mathbb{X} \to \mathsf{DBun}\left[(\mathbb{X}, \mathbb{T}) \right]$. 
\end{example}

\begin{example}\label{ex:smoothbundles} In $(\mathsf{SMAN}, \mathbb{T})$, differential bundles over a smooth manifold $M$ correspond precisely to smooth vector bundles over $M$.  Briefly, recall that a smooth vector bundle, in particular, consists of smooth manifolds $M$, called the base space, and $E$, called the total space, and a smooth surjection $\mathsf{q}: E \to M$, called the projection, such that for each point $x \in M$, the fibre $E_x = \lbrace e \in E \vert \mathsf{q}(e) = x \rbrace$ is a finite-dimensional $\mathbb{R}$-vector space. A smooth vector bundle morphism from $\mathsf{q}: E \to M$ to $\mathsf{q}^\prime: E^\prime \to M^\prime$ consist of two smooth functions $f: M \to M^\prime$ and $g: E \to E^\prime$ such that $f(\mathsf{q}(e)) = \mathsf{q}^\prime(g(e))$ for all $e \in E$ and the induced maps $g_x: E_x \to E^\prime_{f(x)}$ are $\mathbb{R}$-linear maps. Let $\mathsf{SVEC}$ be the category of smooth vector bundles and smooth vector bundle morphisms between them. Every smooth vector bundle over $M$ gives a differential bundle over $M$ in $(\mathsf{SMAN}, \mathbb{T})$, and vice versa. As such, there is an equivalence $\mathsf{DBun}\left[(\mathsf{SMAN}, \mathbb{T}) \right] \simeq \mathsf{SVEC}$. For full details, see \cite{macadam2021vector}.  
\end{example}

\begin{example} In general, for an arbitrary Cartesian differential category, there is not necessarily a nice full description of all differential bundles. However, there is a nice class of differential bundles which plays an important role in the story of this paper. Recall that a Cartesian differential category $\mathbb{X}$ is also a \textbf{Cartesian left additive category} \cite[Definition 1.2.1]{blute2009cartesian}, which in particular means that every homset $\mathbb{X}(A,B)$ is a commutative monoid, with binary operation $+$ and zero $0$. Then for every pair of objects $A$ and $X$, their product $(\pi_0: A \times X \to A, 1_A \times (\pi_0 + \pi_1), \langle 1_A, 0 \rangle, \langle \pi_0, 0, 0, \pi_1 \rangle)$ is a differential bundle over $A$. This generalization the notion of trivial smooth vector bundles from differential geometry. Again we stress that not all differential bundles in a Cartesian differential category are necessarily of this form. That said, for $\mathsf{SMOOTH}$, every differential bundle is of this form since smooth vector bundles over a Euclidean space is a trivial bundle. 
\end{example}

\begin{example} In $(k\text{-}\mathsf{CALG}, \rotatebox[origin=c]{180}{$\mathbb{T}$})$, differential bundles over a commutative $k$-algebra $A$ correspond precisely to $A$-modules \cite[Thm 3.10]{cruttwellLemay:AlgebraicGeometry}. Briefly, let $\mathsf{MOD}$ be the category whose objects are pairs $(A, M)$ consisting of a commutative $k$-algebra $A$ and an $A$-module $M$, and whose maps $(f,g): (A,M) \to (B,N)$ consist of a $k$-algebra morphism $f: A \to B$ and a $k$-linear map $g: M \to N$ such that $g(am) = f(a)g(m)$ for all $a \in A$ and $m \in M$. Then there is an equivalence $\mathsf{DBun}\left[(k\text{-}\mathsf{CALG}, \rotatebox[origin=c]{180}{$\mathbb{T}$}) \right] \simeq \mathsf{MOD}$, which in particular sends an $A$-module $M$ to its nilpotent extension:
\[M[\varepsilon] = \lbrace a + m \varepsilon \vert~ a \in A, m \in M, \varepsilon^2 =0  \rbrace\]
For full details, see \cite[Sec 3]{cruttwellLemay:AlgebraicGeometry}. 
\end{example}

\begin{example}\label{ex:affine-mod} In $(k\text{-}\mathsf{CALG}^{op}, \mathbb{T}^{op})$, differential bundles over a commutative $k$-algebra $A$ again correspond precisely to $A$-modules \cite[Thm 4.15]{cruttwellLemay:AlgebraicGeometry}. However this time, there is an equivalence $\mathsf{DBun}\left[(k\text{-}\mathsf{CALG}^{op}, \mathbb{T}^{op}) \right] \simeq \mathsf{MOD}^{op}$ (or in other words $\mathsf{DBun}\left[(k\text{-}\mathsf{CALG}^{op}, \mathbb{T}^{op}) \right]^{op} \simeq \mathsf{MOD}$), which in particular sends an $A$-module $M$ to the free symmetric $A$-algebra over $M$: 
\[ \mathsf{Sym}_A \left( M \right) = \bigoplus \limits_{n=0}^{\infty} M^{{\otimes^s_A}^n} = A \oplus M \oplus \left( M \otimes^s_A M \right) \oplus \hdots \]
For full details, see \cite[Sec 4]{cruttwellLemay:AlgebraicGeometry}. 
\end{example}

We conclude this section with some results about differential bundles which we will need in the later sections. The first is that the tangent bundle of a differential bundle is also a differential bundle, and the second is that the pullback along the projection of a differential bundle is again a differential bundle. 

\begin{proposition}\label{prop:dbun-tan}\cite[Lem 2.5]{cockett2018differential} In a tangent category $(\mathbb{X}, \mathbb{T})$, if $\mathcal{E} = (\mathsf{q}: E \to A, \sigma, \zeta, \lambda)$ is a differential bundle, then $\overline{\mathsf{T}}(\mathcal{E}) := \left(\mathsf{T}(\mathsf{q}): \mathsf{T}(E) \to \mathsf{T}(A), \mathsf{T}(\sigma), \mathsf{T}(\zeta), \mathsf{T}(\lambda)\mathsf{c}_E  \right)$ is a differential bundle\footnote{It is important to note that the canonical flip is used to the define the lift for the tangent bundle of a differential bundle.}, which we call the tangent bundle of $\mathcal{E}$. Similarly, if $(f,g): \mathcal{E} \to \mathcal{E}^\prime$ is a linear differential bundle morphism, then $\overline{\mathsf{T}}(f,g) := \left( \mathsf{T}(f), \mathsf{T}(g) \right): \overline{\mathsf{T}}(\mathcal{E}) \to \overline{\mathsf{T}}(\mathcal{E}^\prime)$ is a linear differential bundle morphism. This induces a functor $\overline{\mathsf{T}}: \mathsf{DBun}\left[(\mathbb{X}, \mathbb{T}) \right] \to \mathsf{DBun}\left[(\mathbb{X}, \mathbb{T}) \right]$. 
\end{proposition}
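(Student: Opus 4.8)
The plan is to verify, clause by clause, that $\overline{\mathsf{T}}(\mathcal{E})$ satisfies Definition~\ref{def:differentialbundle}, in each case by applying the tangent bundle functor $\mathsf{T}$ to the corresponding piece of structure or axiom of $\mathcal{E}$ and then rewriting with naturality of the canonical flip $\mathsf{c}$ and the coherence equations of the tangent structure. For the pullback requirement: since $\mathcal{E}$ is a differential bundle, every $\mathsf{T}^m$ preserves the $n$-fold pullbacks $E_n$ of $\mathsf{q}$, so $\mathsf{T}(E_n)$ is the $n$-fold pullback of $\mathsf{T}(\mathsf{q})$, and $\mathsf{T}^m(\mathsf{T}(E_n)) = \mathsf{T}^{m+1}(E_n)$ is the $n$-fold pullback of $\mathsf{T}^{m+1}(\mathsf{q}) = \mathsf{T}^m(\mathsf{T}(\mathsf{q}))$; hence $\mathsf{T}(\mathsf{q})$ has all its pullback powers, preserved by every $\mathsf{T}^m$, with $\mathsf{T}(\mathsf{q})_2 = \mathsf{T}(E_2)$, which is the domain of $\mathsf{T}(\sigma)$ as required. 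For the additive structure: $(\mathsf{q},\sigma,\zeta)$ exhibits $E$ as a commutative monoid in the slice $\mathbb{X}/A$, and all the diagrams witnessing this live over the pullback powers $E_n$ that $\mathsf{T}$ preserves, so applying $\mathsf{T}$ shows that $(\mathsf{T}(\mathsf{q}),\mathsf{T}(\sigma),\mathsf{T}(\zeta))$ is a commutative monoid in $\mathbb{X}/\mathsf{T}(A)$, i.e. an additive bundle.

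It remains to treat the lift $\mu := \mathsf{T}(\lambda)\mathsf{c}_E: \mathsf{T}(E)\to\mathsf{T}^2(E)$. Each axiom demanded of $\lambda$ — its compatibility with the projection $\mathsf{p}$ and with $\mathsf{T}(\mathsf{q})$, its additivity with respect to $\sigma$ and $\zeta$, and its compatibility with the vertical lift $\ell$ and the flip $\mathsf{c}$ — has a counterpart phrased for $\mu$ viewed as a lift of the differential bundle $\mathsf{T}(\mathsf{q})$. I would derive each counterpart by applying $\mathsf{T}$ to the original equation for $\lambda$ and then inserting the relevant coherences, e.g. $\mathsf{c}_E\mathsf{c}_E = 1$, $\mathsf{c}_E\mathsf{p}_{\mathsf{T}E}=\mathsf{T}(\mathsf{p}_E)$, $\mathsf{c}_E\mathsf{T}(\mathsf{p}_E)=\mathsf{p}_{\mathsf{T}E}$, the coherences relating $\ell$, $\mathsf{c}$, and $\mathsf{z}$, and naturality of $\mathsf{c}$. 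The role of the factor $\mathsf{c}_E$ is exactly this: $\mathsf{T}(\lambda)$ is ``vertical in the inner copy of $\mathsf{T}$'', whereas the lift of $\mathsf{T}(\mathsf{q})$ must be vertical in the copy of $\mathsf{T}$ corresponding to the base $\mathsf{T}(A)$, and $\mathsf{c}_E$ transposes the two.

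The last clause is the universal property, which asserts that a certain square built from $\lambda$, $\sigma$, $\zeta$ is a pullback and is preserved by every $\mathsf{T}^m$. Applying $\mathsf{T}$ to that square and rewriting once more with the flip coherences should yield the analogous (twisted) square for $\overline{\mathsf{T}}(\mathcal{E})$, which is still a pullback; and it is preserved by every $\mathsf{T}^m$, since $\mathsf{T}^m$ preserving the $\mathsf{T}$-image of the original square amounts to $\mathsf{T}^{m+1}$ preserving the original, which holds by hypothesis. I expect this step to be the main obstacle — not any single coherence, but the bookkeeping of matching the universal-property square of $\overline{\mathsf{T}}(\mathcal{E})$ with the $\mathsf{T}$-image of the one for $\mathcal{E}$ once all the flips are pushed through.

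For the morphism statement, given a linear differential bundle morphism $(f,g): \mathcal{E}\to\mathcal{E}'$, apply $\mathsf{T}$ to the two squares in~\eqref{diffbunmap}: the projection square immediately gives $\mathsf{T}(f)\mathsf{T}(\mathsf{q}')=\mathsf{T}(\mathsf{q})\mathsf{T}(g)$, and applying $\mathsf{T}$ to $f\lambda'=\lambda\mathsf{T}(f)$ gives $\mathsf{T}(f)\mathsf{T}(\lambda')=\mathsf{T}(\lambda)\mathsf{T}^2(f)$, which together with naturality of $\mathsf{c}$, $\mathsf{c}_E\mathsf{T}^2(f)=\mathsf{T}^2(f)\mathsf{c}_{E'}$, yields $\mathsf{T}(f)\,\mathsf{T}(\lambda')\mathsf{c}_{E'} = \mathsf{T}(\lambda)\mathsf{c}_E\,\mathsf{T}^2(f)$, which is the lift square for $(\mathsf{T}(f),\mathsf{T}(g))$. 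Functoriality of $\overline{\mathsf{T}}$ — preservation of identities and composites — then follows from functoriality of $\mathsf{T}$ applied in each coordinate.
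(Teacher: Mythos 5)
Your proposal is correct and follows essentially the same route as the paper's source for this result (the paper gives no proof, deferring to \cite[Lem 2.5]{cockett2018differential}, which proceeds by exactly this direct verification): apply $\mathsf{T}$ to each piece of structure and axiom, use that $\mathsf{T}^m$ preserves the pullback powers $E_n$, insert the canonical flip to reorder the two copies of $\mathsf{T}$ in the lift, and transfer the universal-property pullback along the isomorphism $\mathsf{c}$. Your identification of the coherences needed ($\mathsf{c}\mathsf{c}=1$, $\mathsf{c}_E\mathsf{p}_{\mathsf{T}E}=\mathsf{T}(\mathsf{p}_E)$, naturality of $\mathsf{c}$) and of the functoriality argument for morphisms is accurate.
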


\begin{proposition}\label{prop:dbun-pb}\cite[Lem 2.7]{cockett2018differential} In a tangent category $(\mathbb{X}, \mathbb{T})$, let $\mathcal{E} = (\mathsf{q}: E \to A, \sigma, \zeta, \lambda)$ be a differential bundle and $f: X \to A$ is a map such that for each $n \in \mathbb{N}$, the pullback of $n$ copies of $\mathsf{q}$ along $f$ exists, which we denote as $X \times_M E_n$ with projection maps $\pi_0: X \times_M E_n \to X$ and $\pi_{n+1}: X \times_M E_n \to E$, and for all $m \in \mathbb{N}$, $\mathsf{T}^m$ preserves these pullbacks. Then $X \times_M \mathcal{E} := \left( \pi_0: X \times_M E_n \to X, 1_X \times_M \sigma, 1_X \times_M \zeta, \mathsf{0}_X \times_M \lambda \right) $ is a differential bundle and $(\pi_0, \pi_1): X \times_M \mathcal{E} \to \mathcal{E}$ is a linear differential bundle morphism. 
\end{proposition}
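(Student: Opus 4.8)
The plan is to verify directly that $X \times_M \mathcal{E}$ meets every clause of Definition \ref{def:differentialbundle}, constructing each piece of structure from the universal property of the defining pullback square and checking each axiom on the pullback components. The first step is bookkeeping with pullbacks. By the pasting lemma, the $n$-fold pullback of the new projection $\pi_0: X \times_M E \to X$ exists and is canonically the object $X \times_M E_n$ of the hypothesis, since $E_n$ is the $n$-fold pullback of $\mathsf{q}$ over $A$. As $\mathsf{T}^m$ preserves each $X \times_M E_n$ by hypothesis, we get $\mathsf{T}^m(X \times_M E_n) = \mathsf{T}^m(X) \times_{\mathsf{T}^m(A)} \mathsf{T}^m(E_n)$ with projections $\mathsf{T}^m(\pi_j)$, compatibly with the pasting identifications; in particular $\mathsf{T}(X \times_M E) = \mathsf{T}(X) \times_{\mathsf{T}(A)} \mathsf{T}(E)$. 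This single fact lets every remaining verification be carried out componentwise, using that the pullback projections, and their images under any $\mathsf{T}^m$, are jointly monic.

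Next I would define the structure maps via the universal property of these pullbacks. The sum $1_X \times_M \sigma: X \times_M E_2 \to X \times_M E$ is induced by the projection to $X$ and by the composite of the projection $X \times_M E_2 \to E_2$ with $\sigma$; the required cone equation holds because $\sigma$ is a map over $A$ and then by the defining square. The zero $1_X \times_M \zeta: X \to X \times_M E$ is induced by $1_X$ and $f\zeta$, using $\zeta\mathsf{q} = 1_A$. The lift $\mathsf{0}_X \times_M \lambda: X \times_M E \to \mathsf{T}(X) \times_{\mathsf{T}(A)} \mathsf{T}(E)$ is induced by $\pi_0\mathsf{0}_X$ into $\mathsf{T}(X)$, where $\mathsf{0}_X = \mathsf{z}_X: X \to \mathsf{T}(X)$ is the tangent zero, and by $\pi_1\lambda$ into $\mathsf{T}(E)$; its cone equation $\pi_0\mathsf{z}_X\mathsf{T}(f) = \pi_1\lambda\mathsf{T}(\mathsf{q})$ follows from naturality of $\mathsf{z}$ (so $\mathsf{z}_X\mathsf{T}(f) = f\mathsf{z}_A$), the differential-bundle axiom $\lambda\mathsf{T}(\mathsf{q}) = \mathsf{q}\mathsf{z}_A$, and $\pi_0 f = \pi_1\mathsf{q}$.

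It remains to check the axioms. Post-composing an equation between maps into $X \times_M E$, or into $\mathsf{T}^m(X \times_M E)$, with the jointly-monic $\pi_0, \pi_1$, respectively $\mathsf{T}^m(\pi_0), \mathsf{T}^m(\pi_1)$, splits it into an $X$-component, which is either trivial or an instance of a tangent-structure identity on $X$ (the $X$-side carries the trivial differential bundle on $X$, whose lift is $\mathsf{z}_X$), and an $E$-component, which is exactly the matching axiom of $\mathcal{E}$, possibly post-composed with a pullback projection. Thus the commutative-monoid axioms for $(1_X \times_M \sigma,\, 1_X \times_M \zeta)$ reduce to those for $(\sigma, \zeta)$, and the compatibilities of $\mathsf{0}_X \times_M \lambda$ with $\mathsf{p}, \mathsf{s}, \mathsf{z}, \ell, \mathsf{c}$ reduce to the corresponding compatibilities of $\lambda$; I would present one of these reductions in full and note that the rest have the same shape. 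The step deserving real care is the \emph{universal property} demanded of the lift in Definition \ref{def:differentialbundle}: for $\mathcal{E}$ it says that a certain square built from $\lambda$, $\mathsf{q}$, and the additive structure is a pullback, and one must show the analogous square for $X \times_M \mathcal{E}$ is a pullback. I would obtain this by exhibiting that square as a pasting of the universal-property square of $\mathcal{E}$ with pullback squares coming from $f$ and from the trivial bundle on $X$, invoking the hypothesis that $\mathsf{T}^m$ preserves all the pullbacks involved so that applying $\mathsf{T}$ still yields pullbacks. This is the only place the preservation hypotheses do work beyond the first step, and so it is the main obstacle.

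Finally, that the pullback projections assemble into a linear differential bundle morphism $X \times_M \mathcal{E} \to \mathcal{E}$ is immediate: the projection-preservation square in Definition \ref{def:differentialbundlemorph}(i) is the defining pullback square $\pi_0 f = \pi_1 \mathsf{q}$, and the lift-preservation square is the equation $(\mathsf{0}_X \times_M \lambda)\,\mathsf{T}(\pi_1) = \pi_1\lambda$, which is one of the two equations built into the definition of $\mathsf{0}_X \times_M \lambda$.
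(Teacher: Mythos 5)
The paper gives no proof of this proposition---it is imported wholesale from Cockett--Cruttwell \cite[Lem 2.7]{cockett2018differential}---and your proposal is a correct reconstruction of that proof along essentially the same lines: direct verification of Definition~\ref{def:differentialbundle}, with the structure maps induced by the universal property of the pullback, the equational axioms checked componentwise against the jointly monic projections $\mathsf{T}^m(\pi_0),\mathsf{T}^m(\pi_1)$, and the universal property of the lift obtained by pasting the universal-property pullback of $\mathcal{E}$ with the pullback squares along $f$ and $\mathsf{T}(f)$ (your identification of this as the only step where the $\mathsf{T}^m$-preservation hypotheses do real work is exactly right). One small note: given the paper's convention that the first component of a linear differential bundle morphism is the map on total spaces, the morphism in the statement should read $(\pi_1, f)$ rather than $(\pi_0,\pi_1)$; your verification implicitly uses the correct pair.
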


\section{Reverse Tangent Categories}\label{sec:RTC}

In this section, we introduce the main novel concept of this paper: \emph{reverse} tangent categories. As explained in the introduction, the way we define reverse tangent categories is by generalizing the definition of a Cartesian reverse differential category as a Cartesian differential category with an involution from its linear fibration to its dual fibration (which we review in Ex \ref{ex:CRDC}). For a tangent category, the analogue of the linear fibration is replaced by a suitable fibration of differential bundles. As such, a reverse tangent category is a tangent category with an involution from its differential bundle fibration to its dual fibration. For a review of fibrations and their basic theory, see \cite{jacobs1999categorical}.

We first wish to build a fibration of differential bundles. Unfortunately, for an arbitrary tangent category, the forgetful functor from its category of differential bundles is not necessarily a fibration (as tangent categories do not assume that the necessary pullbacks exist). As such, we will need to specify a class of differential bundles that do form a fibration -- which we call a system of differential bundles.

\begin{definition}\label{def:systemdbun} For a tangent category $(\mathbb{X}, \mathbb{T})$, a \textbf{system of differential bundles} consists of a collection of differential bundles, $\D$, such that:
\begin{enumerate}[{\em (i)}]
\item For every object $A$, $\mathcal{T}(A) \in \mathcal{D}$;
\item If $\mathcal{E} \in \mathcal{D}$, then so is its tangent bundle $\overline{\mathsf{T}}(\mathcal{E}) \in \mathcal{D}$ (see Prop. \ref{prop:dbun-tan});
\item If $\mathcal{E} \in \mathcal{D}$, then for any map $f: X \to A$ in $\mathbb{X}$, the pullback of $\mathcal{E}$ along $f$ exists, and $X \times_A \mathcal{E} \in \mathcal{D}$ (see Prop. \ref{prop:dbun-pb}).
\end{enumerate}
Let $\mathsf{DBun}_\mathcal{D}\left[(\mathbb{X}, \mathbb{T}) \right]$ be the full subcategory of $\mathsf{DBun}\left[(\mathbb{X}, \mathbb{T}) \right]$ whose objects are the differential bundles in $\mathcal{D}$, and let $\mathsf{DBun}_\mathcal{D}\left[A\right]$ be the full subcategory of $\mathsf{DBun}\left[A\right]$ whose objects are the differential bundles over $A$ in $\mathcal{D}$. We denote the forgetful functor as $\mathsf{U}_\mathcal{D}: \mathsf{DBun}_\mathcal{D}\left[(\mathbb{X}, \mathbb{T}) \right] \to \mathbb{X}$ which maps a differential bundles to its base object. 
\end{definition}

\begin{proposition}
If $\D$ is a system of differential bundles on a tangent category $(\mathbb{X}, \mathbb{T})$, then the forgetful functor $\mathsf{U}_\mathcal{D}: \mathsf{DBun}_\mathcal{D}\left[(\mathbb{X}, \mathbb{T}) \right] \to \mathbb{X}$ is a fibration, and the fibre over an object $A$ is $\mathsf{DBun}_\mathcal{D}\left[A\right]$. 
\end{proposition}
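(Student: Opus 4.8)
The plan is to verify the defining conditions of a (cloven) fibration directly for $\mathsf{U}_\mathcal{D}$, using Prop.~\ref{prop:dbun-pb} to produce the Cartesian lifts. First I would recall what needs to be shown: given a differential bundle $\mathcal{E} = (\mathsf{q}: E \to A, \sigma, \zeta, \lambda) \in \mathcal{D}$ and an arbitrary map $f: X \to A$ in $\mathbb{X}$, I must exhibit an object of $\mathsf{DBun}_\mathcal{D}\left[(\mathbb{X}, \mathbb{T})\right]$ lying over $X$ together with a linear differential bundle morphism to $\mathcal{E}$ over $f$, and check that this morphism is Cartesian. The natural candidate is the pullback bundle $X \times_A \mathcal{E}$ from Prop.~\ref{prop:dbun-pb}, which exists and lies in $\mathcal{D}$ precisely because $\mathcal{D}$ is a system of differential bundles (condition (iii) of Def.~\ref{def:systemdbun}); the required morphism is $(\pi_0, \pi_1): X \times_A \mathcal{E} \to \mathcal{E}$, which Prop.~\ref{prop:dbun-pb} already tells us is a linear differential bundle morphism, and it lies over $f$ since $\pi_0$ composed with $\pi_1\mathsf{q}$ equals $f$ by the pullback square.

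The substantive step is the universal (Cartesian) property. Suppose $\mathcal{E}'' \in \mathcal{D}$ is a differential bundle over an object $Y$, with base projection $\mathsf{q}''\colon E'' \to Y$, and suppose $(h,k)\colon \mathcal{E}'' \to \mathcal{E}$ is a linear differential bundle morphism whose base component factors as $k = g f$ for some $g\colon Y \to X$. I need to produce a unique linear differential bundle morphism $(m, g)\colon \mathcal{E}'' \to X \times_A \mathcal{E}$ with $(m,g)(\pi_0,\pi_1) = (h,k)$. On total spaces, the pair $\langle \mathsf{q}''g, h\rangle\colon E'' \to X \times_A E$ is well-defined by the universal property of the pullback $X \times_A E$ in $\mathbb{X}$ (one checks $\mathsf{q}''g \cdot f = \mathsf{q}'' \cdot k = h \cdot \mathsf{q}$ using that $(h,k)$ commutes with projections), and this is forced, so uniqueness is immediate. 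The real work is checking that $m := \langle \mathsf{q}''g, h\rangle$ is a \emph{linear} differential bundle morphism, i.e.\ that it commutes with the lifts; by \cite[Prop 2.16]{cockett2018differential} this is all that must be verified, and it should follow from the description of the lift $\mathsf{0}_X \times_A \lambda$ on $X \times_A \mathcal{E}$ together with the fact that $h$ commutes with $\lambda'', \lambda$ and $g$ is a base map. Finally, I would note that the fibre over $A$ consists by definition of those objects of $\mathsf{DBun}_\mathcal{D}\left[(\mathbb{X}, \mathbb{T})\right]$ mapping to $A$ with morphisms mapping to $1_A$, which is exactly $\mathsf{DBun}_\mathcal{D}\left[A\right]$.

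I expect the main obstacle to be the verification that the mediating map $m$ preserves the lift, and more generally bookkeeping the interaction between the pullback projections $\pi_0, \pi_1$, the zero map $\mathsf{0}_X$ appearing in the lift of the pullback bundle, and the canonical flip hidden inside $\overline{\mathsf{T}}$ — though since $\overline{\mathsf{T}}$ does not appear directly here, the flip should not intrude. A secondary subtlety is ensuring the construction is genuinely functorial/coherent enough to call $\mathsf{U}_\mathcal{D}$ a fibration rather than just checking the pointwise lifting property; but since we only need existence of Cartesian lifts, pointwise verification suffices, and cleavage can be chosen via Prop.~\ref{prop:dbun-pb}. Everything else (composability of the pullbacks along composites of maps, preservation by $\mathsf{T}^m$) is guaranteed by the standing hypotheses in Def.~\ref{def:systemdbun} and Prop.~\ref{prop:dbun-pb}.
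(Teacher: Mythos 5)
Your proposal is correct and takes essentially the same approach as the paper: the paper's proof is the one-line observation that closure of $\mathcal{D}$ under pullbacks supplies the Cartesian lifts, with the Cartesian morphisms being exactly the pullback squares of Prop.~\ref{prop:dbun-pb}. Your write-up simply unpacks that assertion (correctly, including the verification that the mediating map preserves the lift, which follows from $\lambda''\mathsf{T}(\mathsf{q}'') = \mathsf{q}''\mathsf{z}$ and naturality of $\mathsf{z}$ as you anticipate).
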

\begin{proof}
This is immediate since $\D$ is closed under pullbacks; the Cartesian morphisms are precisely the pullback squares of differential bundles. 
\end{proof}

\begin{example} For the tangent categories $(\mathsf{SMAN}, \mathbb{T})$, $(k\text{-}\mathsf{CALG}, \rotatebox[origin=c]{180}{$\mathbb{T}$})$, and $(k\text{-}\mathsf{CALG}^{op}, \mathbb{T}^{op})$,  the class of all differential bundles form a system of differential bundles. These recreate the previously known results that $\mathsf{SVEC}$ is a fibration over $\mathsf{SMAN}$, that $\mathsf{MOD}$ is a fibration over $k\text{-}\mathsf{CALG}$, and that $\mathsf{MOD}^{op}$ is a fibration over $k\text{-}\mathsf{CALG}^{op}$ (or in other words, that $\mathsf{MOD}$ is a cofibration over $k\text{-}\mathsf{CALG}$). 
\end{example}

\begin{example}\label{ex:P} For a Cartesian differential category $\mathbb{X}$, the differential bundles of the form $\pi_0: A \times X \to A$ form a system of differential bundles for $(\mathbb{X}, \mathbb{T}_\mathsf{D})$. The resulting fibration for $\mathcal{P}$ corresponds to the canonical \textbf{linear fibration} \cite[Def 16]{cruttwell2022monoidal} of a Cartesian differential category, which plays a key role in characterizing Cartesian reverse differential categories. The linear fibration $\mathcal{L}\left[ \mathbb{X} \right]$ has objects pairs $(C,A)$ of objects of $\mathbb{X}$ and whose maps are pairs $(f,g): (C,A) \to (D,B)$ where $f: C \to D$ and $g: C \times A \to B$ is \textbf{linear in its second argument} \cite[Def 15]{cruttwell2022monoidal}, that is, $\langle \pi_0, 0, 0, \pi_1 \rangle \mathsf{D}[g] = g$. Relating this back to $\mathcal{P}$, it is straightforward to work out that by the two diagrams of (\ref{diffbunmap}), a linear differential bundle morphism of type $\left( \pi_0: A \times X \to A \right) \to \left( \pi_0: A^\prime \times X^\prime \to A^\prime \right)$ corresponds to a map $A \to A^\prime$ and a map $A \times X \to X^\prime$ which is linear in its second argument. Thus $\mathsf{DBun}_\mathcal{P}\left[(\mathbb{X}, \mathbb{T}_\mathsf{D}) \right] \cong \mathcal{L}\left[ \mathbb{X} \right]$ are equivalent as fibrations. 
\end{example}

\begin{example}\label{ex:P-SMOOTH} As an example of the above, in $\mathsf{SMOOTH}$, the notion of linearity in the Cartesian differential category sense corresponds precisely to the classical notion of $\mathbb{R}$-linearity. Explicitly, a smooth function $F: \mathbb{R}^n \times \mathsf{R}^m \to \mathbb{R}^k$ is linear in its second argument in the above sense if and only if $F$ is $\mathbb{R}$-linear in its second argument, that is, $F(\vec x, r \vec y + s \vec z) = r F(\vec x, \vec y) + s F(\vec x, \vec z)$. Now recall that every smooth vector bundle over $\mathbb{R}^n$ is a trivial vector bundle, and thus (up to isomorphism) of the form $\mathbb{R}^n \times \mathbb{R}^m$. Therefore, $\mathcal{P}$ is precisely the class of all differential bundles of $(\mathsf{SMOOTH}, \mathbb{T}_\mathsf{D})$, and so we have that $\mathsf{DBun}_\mathcal{P}\left[(\mathsf{SMOOTH}, \mathbb{T}_\mathsf{D}) \right] = \mathsf{DBun}\left[(\mathsf{SMOOTH}, \mathbb{T}_\mathsf{D}) \right] \cong \mathcal{L}\left[ \mathbb{X} \right]$.   
\end{example}

We now turn our attention to the notion of the dual fibration. For a fibration $\mathsf{F}: \mathbb{E} \to \mathbb{X}$, its \textbf{dual fibration} \cite[Def 1.10.11]{jacobs1999categorical} is the fibration $\mathsf{F}^\circ: \mathbb{E}^\circ \to \mathbb{X}$ where:
\begin{enumerate}[{\em (i)}]
    \item The objects of $\mathbb{E}^\circ$ are the same as the objects of $\mathbb{E}$;  
    \item A map from $A$ to $B$ in $\E^\circ$ consists of an equivalence class of pairs $[(v,c)]$ where $v: C \to A$ is vertical and $c: C \to B$ is Cartesian, and where two pairs $(v,c)$ and $(v^\prime, c^\prime)$ are equivalent if there is a vertical isomorphism which make the relevant triangles commute;
    \item $\mathsf{F}^\circ: \mathbb{E}^\circ \to \mathbb{X}$ is defined on objects as $\mathsf{F}$ and on maps as $\mathsf{F}^\circ\left( [(v,c)] \right) = \mathsf{F}(v)$. 
\end{enumerate}
The dual fibration of $F$ has the key property that its fibre over $A$ is the \emph{opposite} category of the fibre of $F$ over $A$.  For more details about the dual of a fibration, see \cite[Sec 1.10.11 -- 13]{jacobs1999categorical}. For a system of differential bundles, we can explicitly work out what its dual fibration will be. As this fibration consists of objects and maps as in the arrow category, this is essentially a modified version of the dual fibration of the arrow category; this dual fibration is known in various places as the category of containers \cite{containers} or dependent lenses \cite{gen_lenses}, and has a close relationship to polynomial functors.  In our case, the dual fibration of a system of differential bundles has the following form: 

\begin{proposition}\label{prop:dbundual} If $\D$ is a system of differential bundles on a tangent category $(\mathbb{X}, \mathbb{T})$, then the dual fibration $\mathsf{U}^\circ_\mathcal{D}: \mathsf{DBun}^\circ_\mathcal{D}\left[(\mathbb{X}, \mathbb{T}) \right] \to \mathbb{X}$ consists of:
\begin{enumerate}[{\em (i)}]
    \item Objects are differential bundles $\mathcal{E} \in \mathcal{D}$; 
    \item A map $(f,g): \mathcal{E} = (\mathsf{q}: E \to A, \sigma, \zeta, \lambda) \to \mathcal{E}^\prime = (\mathsf{q}^\prime: E^\prime \to A^\prime, \sigma^\prime, \zeta^\prime, \lambda^\prime)$ in $\mathsf{DBun}^\circ_\mathcal{D}\left[(\mathbb{X}, \mathbb{T}) \right]$ is a pair consisting of a map $f: A \to A^\prime$ of $\mathbb{X}$ and a linear $A^\prime$-differential bundle morphism $g: A \times_{A^\prime} \mathcal{E}^\prime \to \mathcal{E}$, where $A \times_{A^\prime} \mathcal{E}^\prime$ is the pullback bundle of $\mathcal{E}^\prime$ along $f$; that is, a map  $g: A \times_{A^\prime} E^\prime \to E$ in $\mathbb{X}$ such that the following diagrams commmute:     
    \begin{equation}\label{Adiffbunmap-dual}\begin{gathered} 
   \xymatrixcolsep{5pc}\xymatrix{ A \times_{A^\prime} E^\prime \ar[r]^-{g} \ar[dr]_-{\pi_0} & E \ar[d]^-{\mathsf{q}} &  A \times_{A^\prime} E^\prime \ar[d]_-{\mathsf{z} \times_{A^\prime} \lambda} \ar[r]^-{g}   & E \ar[d]^-{\lambda}   \\
& A  & \mathsf{T}\left( A \times_{A^\prime} E^\prime \right)\ar[r]_-{\mathsf{T}(g)}  &  \mathsf{T}(E)  }  \end{gathered}\end{equation}
\item The identity on $\mathcal{E}$ is the pair $(1_A: A \to A, \pi_1: A \times_A E \to E)$; 
\item The composition of maps $(f: A \to A^\prime, g: A \times_{A^\prime} E^\prime \to E): \mathcal{E} \to \mathcal{E}^\prime$ and \\
${(h: A^\prime \to A^{\prime \prime},{k: A^\prime \times_{A^{\prime \prime}} E^{\prime \prime} \to E^\prime}): \mathcal{E}^\prime  \to \mathcal{E}^{\prime \prime}}$ is the pair
    \begin{equation}\label{dual-comp}\begin{gathered} 
 \left( fh,  \xymatrixcolsep{6pc}\xymatrix{ A \times_{A^{\prime \prime}} E^{\prime \prime} \ar[r]^-{\left \langle 1_A, (f \times_{A^\prime} 1_{E^{\prime \prime}}) k \right \rangle}& A \times_{A^\prime} E^\prime  \ar[r]^-{g}  & E  } \right):  \mathcal{E} \to \mathcal{E}^{\prime \prime}  \end{gathered}\end{equation}
\item $\mathsf{U}^\circ_\mathcal{D}: \mathsf{DBun}_\mathcal{D}\left[(\mathbb{X}, \mathbb{T}) \right] \to \mathbb{X}$ is defined on objects $ \mathcal{E} = (\mathsf{q}: E \to A, \sigma, \zeta, \lambda)$ as ${\mathsf{U}^\circ_\mathcal{D}(\mathcal{E}) = A}$, and on maps as ${\mathsf{U}^\circ_\mathcal{D}(f,g) = f}$. 
\end{enumerate}
Furthermore, note that when $A = A^\prime$, for maps in $\mathsf{DBun}^\circ_\mathcal{D}\left[(\mathbb{X}, \mathbb{T}) \right]$ of the form $(1_A, g): \mathcal{E} \to \mathcal{E}^\prime$, the domain of the second component is simply $E^\prime$, so we have that $g: E^\prime \to E$. As such, the fibre over an object $A$ is simply the opposite category $\mathsf{DBun}^{op}_\mathcal{D}\left[A\right]$. 
\end{proposition}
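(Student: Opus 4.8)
The plan is to unwind the general description of the dual fibration (recalled just above) in the special case $\mathsf{F} = \mathsf{U}_\mathcal{D}$. Two facts do most of the work: the fibre of $\mathsf{U}_\mathcal{D}$ over $A$ is $\mathsf{DBun}_\mathcal{D}[A]$, so vertical maps are precisely linear $A$-differential bundle morphisms; and, as observed in the proof of the previous proposition, the Cartesian maps of $\mathsf{U}_\mathcal{D}$ are exactly the pullback squares of differential bundles produced by Proposition \ref{prop:dbun-pb}.

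First I would choose a canonical representative for each morphism. A map $\mathcal{E} \to \mathcal{E}'$ of $\mathsf{DBun}^\circ_\mathcal{D}[(\mathbb{X},\mathbb{T})]$ is a class $[(v,c)]$ with $v: \mathcal{C} \to \mathcal{E}$ vertical and $c: \mathcal{C} \to \mathcal{E}'$ Cartesian; writing $f := \mathsf{U}_\mathcal{D}(c): A \to A'$ (note $\mathsf{U}_\mathcal{D}(\mathcal{C}) = A$ since $v$ is vertical), the map $c$ exhibits $\mathcal{C}$ as a pullback of $\mathcal{E}'$ along $f$. Since $\mathcal{D}$ is closed under pullbacks, Proposition \ref{prop:dbun-pb} provides the specific pullback $A \times_{A'} \mathcal{E}'$ with Cartesian projection $(\pi_0,\pi_1)$, and the universal property of Cartesian lifts yields a unique vertical isomorphism $\mathcal{C} \cong A \times_{A'} \mathcal{E}'$ carrying $c$ to $(\pi_0,\pi_1)$. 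Hence every class has a unique representative $(g,(\pi_0,\pi_1))$ with $g: A \times_{A'} \mathcal{E}' \to \mathcal{E}$ a linear $A$-differential bundle morphism, so a morphism is exactly a pair $(f,g)$ as in (ii). Spelling out the two diagrams of (\ref{Adiffbunmap}) for $g$, and using that by Proposition \ref{prop:dbun-pb} the pullback bundle $A \times_{A'} \mathcal{E}'$ has projection $\pi_0$ and lift $\mathsf{z} \times_{A'} \lambda$, gives precisely the commuting squares (\ref{Adiffbunmap-dual}); reading off that $\mathsf{U}^\circ_\mathcal{D}(f,g) = f$ gives (v). The identity $[(1_\mathcal{E},1_\mathcal{E})]$ on $\mathcal{E}$ has canonical representative $(\pi_1,(\pi_0,\pi_1))$ coming from $A \times_A \mathcal{E} \cong \mathcal{E}$, which is the pair in (iii).

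For composition, the general recipe for $[(v_1,c_1)]: \mathcal{E} \to \mathcal{E}'$ over $f$ followed by $[(v_2,c_2)]: \mathcal{E}' \to \mathcal{E}''$ over $h$ is: reindex the vertical map $v_2$ along $f$ to obtain a vertical map into the domain of $v_1$, compose with $v_1$ to get the new vertical part, and paste the Cartesian lifts to get a Cartesian map over $fh$. Reindexing is legitimate here because $\mathcal{D}$ is closed under pullback and $\mathsf{T}^m$ preserves the relevant pullbacks (Definition \ref{def:systemdbun}, Proposition \ref{prop:dbun-pb}), so $f^*$ sends linear $A'$-differential bundle morphisms to linear $A$-differential bundle morphisms. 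With the canonical representatives $v_1 = g$ and $v_2 = k$, and using the pullback-pasting isomorphism $A \times_{A'}(A' \times_{A''} \mathcal{E}'') \cong A \times_{A''} \mathcal{E}''$, the reindexed map $f^*(k)$ is $\langle 1_A, (f \times_{A'} 1_{E''})\, k \rangle$; postcomposing with $g$ yields exactly the second component of (\ref{dual-comp}), while the pasted Cartesian map reduces in canonical form to the datum $fh$. Finally, when $A = A'$ the pullback bundle $A \times_A \mathcal{E}'$ is $\mathcal{E}'$ itself, so $g: E' \to E$ and the composition rule specializes to composition in $\mathsf{DBun}_\mathcal{D}[A]$ with arrows reversed, giving the fibre $\mathsf{DBun}^{op}_\mathcal{D}[A]$.

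The step I expect to be the main obstacle is the composition formula (\ref{dual-comp}): one must chase the pullback-pasting isomorphism and the reindexing of $k$ along $f$ carefully, and — more substantively — verify that the resulting map really is a linear $A$-differential bundle morphism, which is the one place where the $\mathsf{T}^m$-preservation of pullbacks built into the notion of a system of differential bundles is genuinely needed. The remaining verifications are routine manipulations with the universal properties of Cartesian lifts and pullbacks.
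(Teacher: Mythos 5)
The paper gives no explicit proof of this proposition — it is presented as a direct unwinding of Jacobs' general definition of the dual fibration in the case $\mathsf{F}=\mathsf{U}_\mathcal{D}$ — and your proposal carries out exactly that unwinding (canonical representatives via the chosen Cartesian lifts $(\pi_0,\pi_1)$ from Prop.~\ref{prop:dbun-pb}, identities, reindex-then-compose for composition, and the fibre identification), so it is correct and follows essentially the same route the paper intends. One small point in your favour: your reading of $g$ as a linear $A$-differential bundle morphism (both $A\times_{A'}\mathcal{E}'$ and $\mathcal{E}$ being bundles over $A$) is the one forced by the diagrams (\ref{Adiffbunmap-dual}).
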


It is worth emphasizing the form composition takes in the dual fibration: it is a mixture of a ``forward'' composite in the first component, and a ``reverse'' composite in the second component (with the $k$ appearing before the $g$).  

\begin{example} In \cite{comorphisms}, Higgins and Mackenzie call maps in the dual fibration of $\mathsf{SVEC}$ ``comorphisms'' of vector bundles \cite[Def 1.1]{comorphisms}, and similarly call maps in the dual fibration of $\mathsf{MOD}^{op}$ ``comorphisms'' of modules \cite[Def 2.1]{comorphisms} (though the categories of ``comorphisms'' they define are the opposites of the ones we define in this paper). 
\end{example}

The last ingredient in the definition of a reverse tangent category is an \emph{involution} on the differential bundle fibration. Unlike for a Cartesian reverse differential category where the involution is a ``dagger'' and is asked to be the identity on objects \cite[Def 33]{reverse}, the involution for a reverse tangent category need not be the identity on objects but is still required to be reflexive, which is captured by a natural transformation, and be compatible with the tangent bundle functor. Since a system of differential bundles $\D$ is closed under the tangent bundle, the induced functor $\overline{\mathsf{T}}$ from Prop \ref{prop:dbun-tan} restricts to the class of $\D$, so we have a functor $\overline{\mathsf{T}}_\D: \mathsf{DBun}_\D\left[(\mathbb{X}, \mathbb{T}) \right] \to \mathsf{DBun}_\D\left[(\mathbb{X}, \mathbb{T}) \right]$, which is clearly also a fibration morphism. It follows from \cite[Lem 32]{cruttwell2022monoidal} that we also have a tangent bundle functor on the dual fibration, $\overline{\mathsf{T}}^\circ_\D: \mathsf{DBun}^\circ_\D\left[(\mathbb{X}, \mathbb{T}) \right] \to \mathsf{DBun}^\circ_\D\left[(\mathbb{X}, \mathbb{T}) \right]$.  

\begin{definition} \label{definition:involution} For a tangent category $(\mathbb{X}, \mathbb{T})$ with a system of differential bundles $\mathcal{D}$, a \textbf{linear involution} is a pair $(\ast, \iota)$ consisting of a fibration morphism $(-)^\ast:  \mathsf{DBun}_\mathcal{D}\left[(\mathbb{X}, \mathbb{T}) \right] \to  \mathsf{DBun}^\circ_\mathcal{D}\left[(\mathbb{X}, \mathbb{T}) \right]$ where: 
\begin{enumerate}[{\em (i)}]
    \item The image of a differential bundle $\mathcal{E} = (\mathsf{q}: E \to A, \sigma, \zeta, \lambda)$ is the differential bundle denoted as $\mathcal{E}^\ast = (\mathsf{q}^\ast: E^\ast \to A, \sigma^\ast, \zeta^\ast, \lambda^\ast)$, and $\mathcal{E}^\prime$ is called the \textbf{dual bundle} of $\mathcal{E}$;
    \item The image of a linear differential bundle morphism $(f,g):  \mathcal{E} = (\mathsf{q}: E \to A, \sigma, \zeta, \lambda) \to \mathcal{E}^\prime = (\mathsf{q}^\prime: E^\prime \to A^\prime, \sigma^\prime, \zeta^\prime, \lambda^\prime)$ is denoted as $(f,g)^\ast= (f,g^\ast)$ where $g^\ast: A \times_{A^\prime} {E^\prime}^\ast \to E^\ast$.
\end{enumerate}
and a natural isomorphism $\iota_\mathcal{E}: \mathcal{E} \to \mathcal{E}^{\ast \ast}$ such that the following diagram should commute:
    \begin{equation}\label{T*-commute}\begin{gathered}  \xymatrixcolsep{5pc}\xymatrix{ \mathsf{DBun}_\mathcal{D}\left[(\mathbb{X}, \mathbb{T}) \right] \ar[r]^{(-)^*} \ar[d]_{\overline{\mathsf{T}}} & \mathsf{DBun}^\circ _\mathcal{D}\left[(\mathbb{X}, \mathbb{T}) \right]\ar[d]^{\overline{\mathsf{T}}^\circ} \\ \mathsf{DBun}_\mathcal{D}\left[(\mathbb{X}, \mathbb{T}) \right]  \ar[r]_{(-)^*} & \mathsf{DBun}_\mathcal{D}\left[(\mathbb{X}, \mathbb{T}) \right]  } 
    \end{gathered}\end{equation} 
\end{definition}

\begin{definition} \label{definition:reverse-tangent} A \textbf{reverse tangent category} is a quintuple $(\mathbb{X}, \mathbb{T}, \mathcal{D}, \ast, \iota)$ consisting of a tangent category $(\mathbb{X}, \mathbb{T})$ with a system of differential bundles $\mathcal{D}$ and a linear involution $(\ast, \iota)$.
\end{definition}

On the fibers, the involution induces an involutive functor $(-)^\ast: \mathsf{DBun}^{op}_\mathcal{D}\left[A\right] \to \mathsf{DBun}_\mathcal{D}\left[A\right]$. So for an $A$-linear differential bundle morphism $g: \mathcal{E} \to \mathcal{E}^\prime$, applying the involution will result in an $A$-linear bundle morphism of type $g^\ast: {\mathcal{E}^\prime}^\ast \to \mathcal{E}^\ast$, so in particular its underlying map is $g^\ast: {E^\prime}^\ast \to E^\ast$. 

Essentially, a reverse tangent category is a tangent category where for every differential bundle in the specified system there is a chosen ``dual'' differential bundle. In particular, we can consider the dual of the tangent bundle of an object, which we call the \emph{reverse} tangent bundle of that object.  

\begin{definition} In a reverse tangent category $(\mathbb{X}, \mathbb{T}, \mathcal{D}, \ast, \iota)$, define the \textbf{reverse tangent functor} $\mathcal{T}^\ast: \mathbb{X} \to \mathsf{DBun}_\mathcal{D}\left[(\mathbb{X}, \mathbb{T}) \right]^\circ$ as the following composite: 
    \begin{equation}\label{revtanfun}\begin{gathered} \mathcal{T}^\ast :=  \xymatrixcolsep{5pc}\xymatrix{ \mathbb{X} \ar[r]^-{\mathcal{T}} & \mathsf{DBun}_\mathcal{D}\left[(\mathbb{X}, \mathbb{T}) \right]   \ar[r]^-{(-)^*} & \mathsf{DBun}_\mathcal{D}\left[(\mathbb{X}, \mathbb{T}) \right]^\circ  } 
    \end{gathered}\end{equation}
    where on objects we denote:
    \[ \mathcal{T}^\ast(A) := \left(\mathsf{p}^\ast_A: \mathsf{T}^\ast(A) \to A, \mathsf{s}^\ast_A: \mathsf{T}_2^\ast(A) \to \mathsf{T}^\ast(A), \mathsf{z}^\ast_A:  A \to \mathsf{T}^\ast(A), \ell^\ast_A: \mathsf{T}^\ast(A) \to \mathsf{T}\mathsf{T}^\ast(A) \right)\] 
 and on maps we denote 
     \[ \mathcal{T}^\ast(f) = \left (f: A \to B,  \mathsf{T}^\ast(f): A \times_B \mathsf{T}^\ast(B) \to \mathsf{T}^\ast(A) \right)\] 
The object $\mathsf{T}^\ast(A)$ is called the \textbf{reverse tangent bundle} of $A$. 
\end{definition}

It is worth pointing out what the reverse tangent bundle functor does on maps. Given a map $f: A \to B$, we have that the second component of its image is a map of type $\mathsf{T}^\ast(f): A \times_B \mathsf{T}^\ast(B) \to \mathsf{T}^\ast(A)$. This is exactly what one wants for gradient-based learning: given a point $x$ of the domain and a cotangent vector over $f(x)$ in the \emph{codomain}, $\mathsf{T}^\ast(f)$ produces a cotangent vector in the \emph{domain}. 

\begin{remark}\label{rem:reverse_functor} It is also important to note that in general, the reverse tangent bundle does not induce a functor on the base category, since $\mathsf{T}^\ast$ is not functorial (either in the covariant or contravariant sense) with respect to the composition in $\mathbb{X}$. Moreover, the fact that $\mathsf{T}^\ast$ does not induce a functor on the base category is part of the reason why we have found it difficult to provide a direct description of a reverse tangent category (as was done for reverse cartesian differential categories in \cite{reverse}).  A tangent category has as part of its data natural transformations related to iterates of the tangent bundle functor.  Appropriate analogs of these for reverse tangent categories have thus been difficult to find given that there is no endofunctor to iterate in the reverse situation.  This is why we have instead chosen to define a reverse tangent as a tangent category with an appropriate involution operation.    
\end{remark}

We conclude this section with our main examples of reverse tangent categories. 

\begin{example} \label{ex:sman-rev} \normalfont Smooth manifolds form a reverse tangent category. Let us first describe the dual fibration of smooth vector bundles. Observe that if $\mathsf{q}^\prime: F \to N$ is a smooth vector bundle and $f: M \to N$ is a smooth function, in local coordinates, elements of the pullback $M \times_N F$ are pairs $(x,v)$ where $x \in M$ and $v \in F_{f(x)}$. As such, the dual fibration $\mathsf{SVEC}^\circ$, also called the category of star bundles \cite[Sec 41.1]{natural}, has objects smooth vector bundles $\mathsf{q}: E \to M$, and a map $\mathsf{q}: E \to M$ to $\mathsf{q}^\prime: F \to N$ consists of smooth functions $f: M \to N$ and $g: M \times_N F \to E$ such that for every $x \in M$, $\mathsf{q}(g(x,v)) = x$, and the induced map $g_x: F_{f(x)} \to E_x$ is $\mathbb{R}$-linear, so we may write $g(x,v) = (x, g_x(v))$. There is an involution which sends a smooth vector bundle $\mathsf{q}: E \to M$ to the classical dual bundle $\mathsf{q}^\ast: E^\ast \to M$ from differential geometry, where the fibres of $E^\ast$ are the $\mathbb{R}$-linear duals of the fibre of $E$, that is, $E^\ast_x$ is the dual vector space of $E_x$ in the classical linear algebra sense: 
\[E^\ast_x = \lbrace \phi: E_x \to \mathbb{R} \vert~ \text{$\phi$ $\mathbb{R}$-linear} \rbrace\] 
The involution sends a smooth vector bundle morphism $(f: M\to N, g: E \to F)$ to the pair $(f, g^\ast: M \times_{N} F^\ast \to E^\ast )$, where in local coordinates:
\[g^\ast(x, \phi) = (x, \phi( g_x( -) )\]
So $(\mathsf{SMAN}, \mathbb{T}, \mathcal{D}, \ast, \iota)$ is a reverse tangent category. The reverse tangent bundle is given by the classical \emph{cotangent bundle} $\mathsf{T}^\ast(M)$ from differential geometry, where: 
\[\mathsf{T}^\ast(M) := \bigsqcup \limits_{x \in M}  \mathsf{T}^\ast_x(M)\]
So in local coordinates, elements of the cotangent bundle $\mathsf{T}^\ast(M)$ are pairs $(m, \phi)$ where $m \in M$ and $\phi \in \mathsf{T}^\ast_m(M)$, so $\phi: \mathsf{T}_m(M) \to \mathbb{R}$ is an $\mathbb{R}$-linear morphism. For a smooth function $f: M \to N$, we have that its image via the reverse tangent bundle is of type $\mathsf{T}^\ast(f): M \times_N \mathsf{T}^\ast(N) \to \mathsf{T}^\ast(M)$. In local coordinates, elements of $ M \times_N \mathsf{T}^\ast(N)$ are pairs $(m, \phi: \mathsf{T}_{f(m)}(N) \to \mathbb{R})$, and so $\mathsf{T}^\ast(f)$ is defined is given as follows: 
\[\mathsf{T}^\ast(f)(m, \phi) = \left( m, \phi\left( T_m(f) (-) \right) \right)\] 
\end{example}

\begin{example}\label{ex:CRDC} Every Cartesian reverse differential category is a reverse tangent category. Briefly, a Cartesian reverse differential category \cite[Def 13]{reverse}  can be defined as a category with finite products which in particular comes equipped with a \textbf{reverse differential combinator} $\mathsf{R}$ which associates every map $f: A \to B$ to a map of type $\mathsf{R}[f]: A \times B \to A$, called the reverse derivative of $f$. Alternatively, a Cartesian reverse differential category can be equivalently characterized as a Cartesian differential category equipped with a linear dagger \cite[Thm 42]{reverse}. Briefly, for a Cartesian differential category $\mathbb{X}$, a \textbf{linear dagger} \cite[Def 39]{reverse} is a fibration morphism $(-)^\dagger: \mathcal{L}[\mathbb{X}]^\circ \to \mathcal{L}[\mathbb{X}]$ which is the identity on objects and involutive, such that each fibre of the linear fibration has dagger biproducts -- the dual of the linear fibration is described in detail in \cite[Ex 34]{cruttwell2022monoidal}. In particular, the linear dagger allows one to transpose linear arguments, that is, for every map $g: C\times A \to B$ which is linear in its second argument, the linear dagger gives a map $g^\dagger: C \times B \to A$ which is linear in its second argument. Now by using the linear dagger $\dagger$ for the linear involution and setting $\iota =1$ (since the dagger is the identity on objects), we thus have that $(\mathbb{X}, \mathbb{T}_\mathsf{D}, \mathcal{P}, \dagger, 1)$ is a reverse tangent category. In particular, the reverse tangent bundle of an object $A$ is $\mathsf{T}^\ast(A) = A \times A$, while for a map $f: A \to B$, its image via the reverse tangent bundle is of type $\mathsf{T}^\ast(f): A \times B \to A \times A$ and can be nicely expressed using the reverse differential combinator as follows:
\[\mathsf{T}^\ast(f) = \langle \pi_0, \mathsf{R}[f] \rangle\] 
\end{example}

\begin{example} $\mathsf{SMOOTH}$ is a Cartesian reverse differential category where for a smooth function $F: \mathbb{R}^n \to \mathbb{R}^m$, which recall is an $m$-tuple $F = \langle f_1, \hdots, f_m \rangle$ of smooth functions ${f_i: \mathbb{R}^n \to \mathbb{R}}$, its reverse derivative ${\mathsf{R}[F]: \mathbb{R}^n \times \mathbb{R}^m \to \mathbb{R}^n}$ is defined as the $n$-tuple:
\[\mathsf{R}[F](\vec x, \vec z) := \left \langle \sum \limits^m_{i=1} \frac{\partial f_i}{\partial x_1}(\vec x) z_i, \hdots, \sum \limits^m_{i=1} \frac{\partial f_i}{\partial x_n}(\vec x) z_i \right \rangle\] 
Thus $(\mathsf{SMOOTH}, \mathbb{T}_\mathsf{D}, \mathcal{P}, \dagger, 1)$ is a reverse tangent category, where in particular:
\begin{align*}
    \mathsf{T}^\ast(\mathbb{R}^n) = \mathbb{R}^n \times \mathbb{R}^n && \mathsf{T}^\ast(F)(\vec x, \vec z) = \left( \vec x, \mathsf{R}[F](\vec x, \vec z) \right)
\end{align*}
\end{example}

\begin{example} \label{ex:cring-rev} \normalfont All of $k\text{-}\mathsf{CALG}$ will not form a reverse tangent category; instead, we must restrict to those algebras which are finitely generated free $k$-modules. So, let $k\text{-}\mathsf{CALG}_{f}$ and $\mathsf{MOD}_f$ be the full subcategories whose objects are finitely generated free $k$-modules. Clearly we still have that $(k\text{-}\mathsf{CALG}_f, \rotatebox[origin=c]{180}{$\mathbb{T}$})$ is a tangent category and $\mathsf{DBun}\left[(k\text{-}\mathsf{CALG}_f, \rotatebox[origin=c]{180}{$\mathbb{T}$}) \right] \simeq \mathsf{MOD}_f$ is still a fibration of differential bundles, and thus all differential bundles form a system of differential bundles. The dual fibration $\mathsf{MOD}_f^\circ$ has objects pairs $(A,M)$ consisting of a commutative $k$-algebra $A$ and an $A$-module $M$, and where a map is now a pair $(f,g): (A,M) \to (B,N)$ consisting of a $k$-algebra morphism $f: A \to B$ and a $k$-linear map $g: N \to M$ such that $g(f(a) n) = a g(n)$. Since an $A$-module $M$ is also a $k$-module, it make sense to consider the $k$-linear dual of $M$, which we denote as:
\[M^\circledast = \lbrace \phi: M \to k \vert~ \text{$\phi$ $k$-linear} \rbrace\] 
Note that $M^\circledast$ is also an $A$-module via the action $(a, \phi) \mapsto \phi(a \cdot -)$. Since $M$ is a finitely generated free $k$-module, it is reflexive so we have the canonical isomorphism $M \cong M^{\circledast\circledast}$. As such, this induces an involution which sends objects $(A,M)$ to $(A, M^\circledast)$, and maps $(f: A \to B, g: M \to N)$ to $(f: A \to B, g^\circledast: N^\circledast \to M^\circledast)$ where:
\[g^{\circledast}(\phi) = \phi(g(-))\] 
So $(k\text{-}\mathsf{CALG}, \rotatebox[origin=c]{180}{$\mathbb{T}$}, \mathcal{D}, \circledast, \iota)$ is a reverse tangent category. For a commutative $k$-algebra $A$, its reverse tangent bundle is:
\[\rotatebox[origin=c]{180}{$\mathsf{T}$}^\circledast(A) = A^\circledast[\varepsilon] = \lbrace a + \phi \varepsilon \vert~ a \in A, \phi: A \to k \vert~  \text{$\phi$ $k$-linear and $\varepsilon^2 =0$} \rbrace\] 
Now consider a $k$-algebra morphism $f: A \to B$, and note that:
\[A \times_B \rotatebox[origin=c]{180}{$\mathsf{T}$}^\circledast(B) =  \lbrace a + \phi \varepsilon \vert~ a \in A, \phi: V \to k \text{ where }  \text{$\phi$ $k$-linear and $\varepsilon^2 =0$} \rbrace\] 
So $\mathsf{T}^\circledast(f): A \times_B \rotatebox[origin=c]{180}{$\mathsf{T}$}^\circledast(B)  \to \rotatebox[origin=c]{180}{$\mathsf{T}$}^\circledast(A) $ is defined as:
\[\rotatebox[origin=c]{180}{$\mathsf{T}$}^\circledast(f)(a + \phi \varepsilon) = a + \phi\left( f(-) \right) \epsilon\] 
This example generalizes nicely to the star autonomous setting. Indeed, the category of commutative monoids of any star autonomous category with distributive finite biproducts is a reverse tangent category -- thus providing a bountiful source of examples of reverse tangent categories. 
\end{example}

\begin{example} \label{ex:affine-rev} \normalfont Similarly, all of $k\text{-}\mathsf{CALG}^{op}$ will not form a reverse tangent category; instead, we must restrict to the subcategory of \emph{smooth} algebras and the display system given by the finitely generated projective modules. This requires some setup. So briefly, a smooth $k$-algebra is a commutative $k$-algebra $A$ whose associated affine scheme is smooth \cite[Def 10.137.1]{stacks-project}. Let $k\text{-}\mathsf{SmALG}$ be the full subcategory of $k\text{-}\mathsf{CALG}$ whose objects are the smooth $k$-algebras. Then to explain why $k\text{-}\mathsf{SmALG}$ is a tangent category, we need to explain why tangent algebras of smooth algebras are again smooth. Firstly, for a smooth $k$-algebra $A$ and a finitely generated projective module $M$, $\mathsf{Sym}_A(M)$ is a smooth $A$-algebra \cite[Prop 17.3.8]{grothendieck1966elements}. Moreover, \cite[Lemma 10.137.14]{stacks-project} implies that smoothness is preserved via restriction of scalars, and therefore $\mathsf{Sym}_A(M)$ is also a smooth $k$-algebra. Now for a smooth $k$-algebra $A$, $\Omega(A)$ is a finitely generated projective $A$-module \cite[Sec 10.142.(2)]{stacks-project}. Therefore, we have that $\mathsf{T}(A)$ is a smooth $k$-algebra (and a smooth $A$-algebra). Furthermore, smoothness is also preserved via change of basis \cite[ Lemma 10.137.4]{stacks-project}, which implies that $\mathsf{T}_n(A)$ is also a smooth $k$-algebra (and a smooth $A$-algebra). So we conclude that $\left(k\text{-}\mathsf{SmALG}^{op}, \mathbb{T} \right)$ is indeed a tangent category, and the differential bundles will again correspond to modules. For our system of differential bundles $\mathcal{D}$, we take the class of differential bundles that correspond to finitely generated projective modules. That $\mathcal{D}$ is indeed a system of differential bundles follows from the fact that the module of Kähler differentials of a smooth algebra is finitely generated projective, so $\mathcal{D}$ is closed under tangent bundles, and since the extension of scalars preserves finitely generated projective modules \cite[Excercise 8.4]{farb2012noncommutative}, this implies that $\mathcal{D}$ is also closed under pullback (since the pullback in $k\text{-}\mathsf{CALG}^{op}$ in this case is given by $B \otimes_A \mathsf{Sym}_A(M) \cong \mathsf{Sym}_B(B \otimes_A M)$). We can give an alternative description of $\mathsf{DBun}\left[(k\text{-}\mathsf{SmALG}, \mathbb{T}) \right]_\mathcal{D}$ similar to that of Ex \ref{ex:affine-mod}. Let $\mathsf{FMOD}$ be the full subcategory of $\mathsf{MOD}$ whose objects are the pairs $(A,M)$ consisting of a smooth $k$-algebra and a finitely generated projective $A$-module $M$\footnote{It is important to note the difference between Ex \ref{ex:cring-rev} and Ex \ref{ex:affine-rev}. In the former, we consider modules that are finitely generated over the base ring, while in the latter we take modules that are finitely generated over the algebra.}. Then we have that $\mathsf{DBun}\left[(k\text{-}\mathsf{SmALG}, \mathbb{T}) \right]_\mathcal{D} \simeq \mathsf{FMOD}^{op}$. Now the dual fibration $\left(\mathsf{FMOD}^{op}\right)^\circ$ is easier understood via its opposite category ${\left(\mathsf{FMOD}^{op}\right)^\circ}^{op}$. This category has the same objects as $\mathsf{FMOD}$, but where a map $(f,g): (A,M) \to (B,N)$ consists a $k$-algebra morphism $f: A \to B$ and an $A$-linear map $g: N \to B \otimes_A M$ in the sense that $g(f(a) n) = a g(n)$. For an $A$-module $M$, consider its $A$-linear dual, which we denote as: 
\[M^\ast = \lbrace \phi: M \to A \vert~ \text{$\phi$ $A$-linear} \rbrace\] 
Since $M$ is a finitely generated free $A$-module, it is reflexive as an $A$-module so we have the canonical isomorphism $M \cong M^{\ast\ast}$. As such, this induces an involution which sends objects $(A,M)$ to $(A, M^\ast)$. To describe what it does on a map, recall that a finitely generated projective $A$-module $M$ has a finite generating set $\lbrace e_1, ..., e_n \rbrace$ and also a finite generating set $\lbrace e^\ast_1, ..., e^\ast_n \rbrace$ for $M^\ast$. Then the involution takes a map $(f: A \to B, g: M \to N)$ to $(f: A \to B, g^\ast: N^\ast \to B \otimes_A M^\ast)$ where $g^\ast$ is defined as follows:  
\[ g^\ast(\phi) = \sum\limits^n_{i=1} \phi(g(e_i)) \otimes_A e^\ast_i \]
So $(k\text{-}\mathsf{SmALG}^{op}, \mathbb{T}, \mathcal{D}, \ast, \iota)$ is a reverse tangent category. For a commutative $k$-algebra $A$, it is well known that the $A$-linear dual of $\Omega(A)$ is $\mathsf{DER}(A)$ the module of derivations on $A$ -- which recall is a $k$-linear map $\mathsf{D}: A \to A$ which satisfies the Leibniz rule:
\[ \mathsf{D}(ab) = a \mathsf{D}(b) + b \mathsf{D}(a) \]
Therefore, for a smooth $k$-algebra $A$, we may take its reverse tangent bundle to be given as:
\[\mathsf{T}^\ast(A) = \mathsf{Sym}_A\left( \mathsf{DER}(A) \right) \] 
To describe what the reverse tangent bundle does on an algebra morphism, we must use the fact since $A$ is a smooth $k$-algebra morphism it is, by definition, a finitely presented $k$-algebra so $A \cong k[x_1, \hdots, x_n]/I$ where $k[x_1, \hdots, x_n]$ is the polynomial ring and $I$ is a finitely generated ideal $I = \left( p_1(\vec x), \hdots, p_m(\vec x) \right)$. Abusing notation slightly, let $\partial_i: A \to A$ be the derivation on $A$ associated with differentiating polynomials with respect to the $x_i$ variable (where we again abuse notation slightly and take $x_i \in A$). Then for a $k$-algebra morphism $f: A \to B$, applying $\mathsf{T}^\ast$ to it gives a $k$-algebra morphism of type $\mathsf{T}^\ast(f):  \mathsf{T}^\ast(B) \to B \otimes_A \mathsf{T}^\ast(A)$ defined as follows on generators $b \in B$ and $\mathsf{D} \in \mathsf{DER}(B)$: 
\begin{align*}
    \mathsf{T}^\ast(f)(b) = b \otimes_A 1 && \mathsf{T}^\ast(f)\left( \mathsf{D} \right) =  \sum\limits^n_{i=1} \mathsf{D}\left( f(x_i) \right) \otimes_A \partial_i 
\end{align*}
\end{example} 

\section{Some Theory of Reverse Tangent Categories}\label{sec:theory}

In this section, we provide some basic results that apply in any reverse tangent category. In particular, these results generalize key concepts about the cotangent bundle from classical differential geometry. 

We begin with the notion of a covector field.  In differential geometry, covector fields correspond to differential 1-forms. A key property of covector fields in differential geometry is that they can be pulled back; that is, a covector field on the codomain of a map can be pulled back to a covector field on the domain. The same is true in a reverse tangent category. 

\begin{definition} In a reverse tangent category $(\mathbb{X}, \mathbb{T}, \mathcal{D}, \ast, \iota)$, a \textbf{covector field} of an object $A$ is a section of $\mathsf{p}^\ast_A: \mathsf{T}^\ast(A) \to A$, that is, a map $\omega: A \to \mathsf{T}^\ast(A)$ such that the following diagram commutes: 
   \begin{equation}\label{eq:covf}\begin{gathered} \xymatrixcolsep{5pc}\xymatrix{  A \ar@{=}[dr]_-{} \ar[r]^-{\omega} & \mathsf{T}^\ast(A)  \ar[d]^-{\mathsf{p}^\ast_A}   \\
   & A  } 
   \end{gathered}\end{equation}
\end{definition}

\begin{proposition} (Pullback of covector fields) In a reverse tangent category $(\mathbb{X}, \mathbb{T}, \mathcal{D}, \ast, \iota)$, if $\omega: B \to \mathsf{T}^\ast(B)$ is a covector field, then for any map $f: A \to B$, the composite 
    \begin{equation}\label{covec-pb}\begin{gathered} \xymatrixcolsep{5pc}\xymatrix{ A \ar[r]^-{\left \langle 1_A, f \omega \right \rangle} & A \times_{B}  \mathsf{T}^\ast(B) \ar[r]^-{ \mathsf{T}^\ast(f)} &  \mathsf{T}^\ast(A) } 
    \end{gathered}\end{equation}
is a covector field on $A$.
\end{proposition}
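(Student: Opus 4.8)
The plan is to verify directly that the composite in \eqref{covec-pb}, call it $\widetilde{\omega}$, satisfies the section condition \eqref{eq:covf} for $A$, i.e. $\widetilde{\omega}\,\mathsf{p}^\ast_A = 1_A$. There are really only two things to check: first, that the pairing $\langle 1_A, f\omega\rangle$ genuinely lands in the pullback $A \times_B \mathsf{T}^\ast(B)$, so that \eqref{covec-pb} is well-typed; and second, that post-composing the whole composite with $\mathsf{p}^\ast_A$ collapses to the identity.

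For the first point, I would recall that a map into the pullback $A \times_B \mathsf{T}^\ast(B)$ (of $f$ along $\mathsf{p}^\ast_B$) from $A$ is the same as a pair of maps $A \to A$ and $A \to \mathsf{T}^\ast(B)$ which agree after composing with $f$ and $\mathsf{p}^\ast_B$ respectively. Here the two maps are $1_A$ and $f\omega$, so the required compatibility is $f = f\omega\,\mathsf{p}^\ast_B$; and this holds immediately because $\omega$ is a covector field on $B$, i.e. $\omega\,\mathsf{p}^\ast_B = 1_B$ by \eqref{eq:covf}. Hence $\langle 1_A, f\omega\rangle$ is well-defined, and by the universal property of the pullback it satisfies $\langle 1_A, f\omega\rangle\,\pi_0 = 1_A$, where $\pi_0 \colon A \times_B \mathsf{T}^\ast(B) \to A$ is the pullback projection.

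For the second point, the key observation is that $\mathsf{T}^\ast(f) \colon A \times_B \mathsf{T}^\ast(B) \to \mathsf{T}^\ast(A)$ is, by definition of the reverse tangent functor, exactly the second component of the map $\mathcal{T}^\ast(f)$ in the dual fibration $\mathsf{DBun}^\circ_\mathcal{D}\left[(\mathbb{X},\mathbb{T})\right]$; in particular, by the description of such maps in Prop \ref{prop:dbundual}, it is a linear $A$-differential bundle morphism $A \times_B \mathcal{T}^\ast(B) \to \mathcal{T}^\ast(A)$, so it satisfies the left-hand triangle of \eqref{Adiffbunmap-dual} (instantiated with $\mathcal{E} = \mathcal{T}^\ast(A)$, $\mathcal{E}^\prime = \mathcal{T}^\ast(B)$, $g = \mathsf{T}^\ast(f)$). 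That triangle reads precisely $\mathsf{T}^\ast(f)\,\mathsf{p}^\ast_A = \pi_0$. Combining the two points then gives
\[ \widetilde{\omega}\,\mathsf{p}^\ast_A \;=\; \langle 1_A, f\omega\rangle\,\mathsf{T}^\ast(f)\,\mathsf{p}^\ast_A \;=\; \langle 1_A, f\omega\rangle\,\pi_0 \;=\; 1_A, \]
so $\widetilde{\omega}$ is a section of $\mathsf{p}^\ast_A$, hence a covector field on $A$.

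There is no real obstacle here: the statement is an unwinding of the definition of a covector field, the universal property of the pullback, and the triangle identity that every map in the dual fibration satisfies by construction. The only mild point of care is checking that the pairing $\langle 1_A, f\omega\rangle$ is legitimate — which is exactly where the hypothesis that $\omega$ is a section is used — and, at the bookkeeping level, matching the relevant instance of \eqref{Adiffbunmap-dual} to the data $(f, \mathsf{T}^\ast(f))$ correctly.
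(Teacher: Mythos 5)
Your argument is correct and is essentially the paper's own proof: both verify the section condition by applying the left-hand triangle of (\ref{Adiffbunmap-dual}) to the map $(f,\mathsf{T}^\ast(f))$ in the dual fibration to get $\mathsf{T}^\ast(f)\,\mathsf{p}^\ast_A = \pi_0$, and then use $\left\langle 1_A, f\omega\right\rangle \pi_0 = 1_A$. The only difference is that you additionally spell out why $\left\langle 1_A, f\omega\right\rangle$ is a well-defined map into the pullback, a bookkeeping point the paper leaves implicit.
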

\begin{proof} Since $(f,  \mathsf{T}^\ast(f))$ is a map in $\mathsf{DBun}^\circ_\D\left[(\mathbb{X}, \mathbb{T}) \right]$, by the left diagram of (\ref{Adiffbunmap-dual}) we easily compute that $\left \langle 1_A, f \omega \right \rangle  \mathsf{T}^\ast(f) \mathsf{p}^\ast_A = \left \langle 1_A, f \omega \right \rangle \pi_0 = 1_A$, as required. \end{proof}

Our next observation is that there is a canonical isomorphism between the composites of the tangent bundle and the reverse tangent bundle, generalizing a result for smooth manifolds \cite[Sec 26.11]{natural}.

\begin{proposition} In a reverse tangent category $(\mathbb{X}, \mathbb{T}, \mathcal{D}, \ast, \iota)$, for every object $A$, there is a natural linear $\mathsf{T}(A)$-differential bundle isomorphism $\mathsf{c}^\ast_A: \overline{\mathsf{T}}\left( \mathcal{T}^\ast(A) \right) \to \mathcal{T}^\ast(\mathsf{T}(A))$, so in particular the following diagram commutes: 
   \begin{equation}\label{c*}\begin{gathered} \xymatrixcolsep{5pc}\xymatrix{  \mathsf{T}\mathsf{T}^\ast(A) \ar[rr]^-{\mathsf{c}^\ast_A} \ar[dr]_-{\mathsf{T}(\mathsf{p}^\ast_A)} && \mathsf{T}^\ast \mathsf{T}(A)  \ar[dl]^-{\mathsf{p}^\ast_{\mathsf{T}(A)}}  \\
   & \mathsf{T}(A)  } 
   \end{gathered}\end{equation}
\end{proposition}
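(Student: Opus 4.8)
The plan is to construct $\mathsf{c}^\ast_A$ by dualizing the canonical flip, viewed as an isomorphism of differential bundles. First I would show that the pair $\phi_A := (\mathsf{c}_A, 1_{\mathsf{T}(A)})$ is a linear $\mathsf{T}(A)$-differential bundle isomorphism $\overline{\mathsf{T}}(\mathcal{T}(A)) \to \mathcal{T}(\mathsf{T}(A))$. Both of these bundles have base object $\mathsf{T}(A)$ and total space $\mathsf{T}^2(A)$, but with projections $\mathsf{T}(\mathsf{p}_A)$ and $\mathsf{p}_{\mathsf{T}(A)}$ respectively, so the tangent category identity $\mathsf{c}_A \mathsf{p}_{\mathsf{T}(A)} = \mathsf{T}(\mathsf{p}_A)$ makes the left (triangle) diagram of (\ref{Adiffbunmap}) commute. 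For the right (square) diagram one must verify that $\mathsf{c}_A$ intertwines the lift $\mathsf{T}(\ell_A)\mathsf{c}_{\mathsf{T}(A)}$ of $\overline{\mathsf{T}}(\mathcal{T}(A))$ (see Prop \ref{prop:dbun-tan}) with the lift $\ell_{\mathsf{T}(A)}$ of $\mathcal{T}(\mathsf{T}(A))$, i.e.\ that $\mathsf{T}(\ell_A)\mathsf{c}_{\mathsf{T}(A)}\mathsf{T}(\mathsf{c}_A) = \mathsf{c}_A \ell_{\mathsf{T}(A)}$; this is a short diagram chase from the coherence equations between $\ell$ and $\mathsf{c}$ in \cite[Def 2.3]{cockett2014differential} (compare also \cite{cockett2018differential}). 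Since $\mathsf{c}$ is its own inverse, $\phi_A$ is an isomorphism, and since $\mathsf{c}$ is natural, the family $\phi := (\phi_A)_A$ is a natural isomorphism $\overline{\mathsf{T}} \circ \mathcal{T} \Rightarrow \mathcal{T} \circ \mathsf{T}$ of functors $\mathbb{X} \to \mathsf{DBun}_\mathcal{D}\left[(\mathbb{X}, \mathbb{T})\right]$; all the bundles appearing here lie in $\mathcal{D}$ because $\mathcal{D}$ is closed under $\mathcal{T}$ on objects and under $\overline{\mathsf{T}}$.

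Next I would apply the involution fibrewise. Regarding $\phi_A$ as a morphism of $\mathsf{DBun}_\mathcal{D}\left[\mathsf{T}(A)\right]$ and using the induced involutive functor $(-)^\ast : \mathsf{DBun}^{op}_\mathcal{D}\left[\mathsf{T}(A)\right] \to \mathsf{DBun}_\mathcal{D}\left[\mathsf{T}(A)\right]$, define $\mathsf{c}^\ast_A := \left(\phi_A^{-1}\right)^\ast$, which is again an isomorphism in $\mathsf{DBun}_\mathcal{D}\left[\mathsf{T}(A)\right]$. It only remains to identify its source and target. By the definition of $\mathcal{T}^\ast$ we have $\left(\mathcal{T}(\mathsf{T}(A))\right)^\ast = \mathcal{T}^\ast(\mathsf{T}(A))$, and the commuting square (\ref{T*-commute}) — together with the fact that $\overline{\mathsf{T}}$ and $\overline{\mathsf{T}}^\circ$ act identically on objects (the objects of $\mathsf{DBun}^\circ_\mathcal{D}$ being exactly those of $\mathsf{DBun}_\mathcal{D}$) — gives $\left(\overline{\mathsf{T}}(\mathcal{T}(A))\right)^\ast = \overline{\mathsf{T}}^\circ\left(\mathcal{T}(A)^\ast\right) = \overline{\mathsf{T}}(\mathcal{T}^\ast(A))$. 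Therefore $\mathsf{c}^\ast_A : \overline{\mathsf{T}}(\mathcal{T}^\ast(A)) \to \mathcal{T}^\ast(\mathsf{T}(A))$, as required.

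Finally, the triangle (\ref{c*}) is automatic: being a linear $\mathsf{T}(A)$-differential bundle morphism, $\mathsf{c}^\ast_A$ commutes with the projections of its source and target by the left diagram of (\ref{Adiffbunmap}), and those projections are $\mathsf{T}(\mathsf{p}^\ast_A)$ (apply $\overline{\mathsf{T}}$ to $\mathcal{T}^\ast(A) = \left(\mathsf{p}^\ast_A : \mathsf{T}^\ast(A) \to A, \mathsf{s}^\ast_A, \mathsf{z}^\ast_A, \ell^\ast_A\right)$, using Prop \ref{prop:dbun-tan}) and $\mathsf{p}^\ast_{\mathsf{T}(A)}$, so that $\mathsf{c}^\ast_A \, \mathsf{p}^\ast_{\mathsf{T}(A)} = \mathsf{T}(\mathsf{p}^\ast_A)$, which is exactly (\ref{c*}). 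Naturality of $\mathsf{c}^\ast$ is inherited from that of $\phi$, since the $\mathsf{c}^\ast_A$ are built from the natural transformation $\mathsf{c}$ by the functorial operations $\mathcal{T}$, $\overline{\mathsf{T}}$, and $(-)^\ast$; concretely, applying $(-)^\ast$ to the natural isomorphism $\phi$ exhibits the family $(\mathsf{c}^\ast_A)_A$ as a natural isomorphism between $\overline{\mathsf{T}}^\circ \circ \mathcal{T}^\ast$ and $\mathcal{T}^\ast \circ \mathsf{T}$.

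The step I expect to be the main obstacle is the lift-compatibility identity $\mathsf{T}(\ell_A)\mathsf{c}_{\mathsf{T}(A)}\mathsf{T}(\mathsf{c}_A) = \mathsf{c}_A \ell_{\mathsf{T}(A)}$ in the first step — equivalently, the assertion that $\mathsf{c}_A$ exhibits $\overline{\mathsf{T}}(\mathcal{T}(A)) \cong \mathcal{T}(\mathsf{T}(A))$ as differential bundles over $\mathsf{T}(A)$; everything afterwards is bookkeeping with the dual fibration. This identity holds in any tangent category, being a consequence of the equations governing $\ell$ and $\mathsf{c}$, and amounts precisely to the statement that the canonical flip makes the functor $\mathcal{T}$ commute with the tangent functors on $\mathbb{X}$ and on $\mathsf{DBun}\left[(\mathbb{X}, \mathbb{T})\right]$.
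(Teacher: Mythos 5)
Your proposal is correct and takes essentially the same route as the paper: both construct $\mathsf{c}^\ast_A$ by applying the fibrewise involution to the canonical flip viewed as a linear differential bundle isomorphism between $\mathcal{T}(\mathsf{T}(A))$ and $\overline{\mathsf{T}}(\mathcal{T}(A))$, and both use the compatibility square (\ref{T*-commute}) to identify the resulting source and target. The only difference is that you spell out the lift-compatibility check that $\mathsf{c}_A$ is indeed a linear bundle morphism, which the paper simply cites as a known fact about tangent categories.
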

\begin{proof} In any tangent category, the canonical flip is an $A$-linear differential bundle isomorphism $\mathsf{c}_A: \mathcal{T}\left( \mathsf{T}(A) \right) \to  \overline{\mathsf{T}}\left( \mathcal{T}(A) \right)$. So $\mathsf{c}_A$ is an isomorphism in $\mathsf{DBun}_\mathcal{D}\left[A\right]$. Now by (\ref{T*-commute}), the dual bundle of $\mathcal{T}\left( \mathsf{T}(A) \right)$ is the tangent bundle of the reverse tangent bundle; that is, $\overline{\mathsf{T}}\left( \mathcal{T}^\ast(A) \right)$ with projection $\mathsf{T}(\mathsf{p}^\ast_A)$, while the dual bundle of $\mathcal{T}\left( \mathsf{T}(A) \right)$ is the reverse tangent bundle of the tangent bundle $\mathcal{T}^\ast(\mathsf{T}(A))$ with projection $\mathsf{p}^\ast_{\mathsf{T}(A)}$. As such, applying the induced involution $(-)^\ast: \mathsf{DBun}^{op}_\mathcal{D}\left[A\right] \to \mathsf{DBun}_\mathcal{D}\left[A\right]$ to $\mathsf{c}_A$, we obtain an $A$-linear differential bundle isomorphism $\mathsf{c}^\ast_A: \overline{\mathsf{T}}\left( \mathcal{T}^\ast(A) \right) \to \mathcal{T}^\ast(\mathsf{T}(A))$. So in particular we have an isomorphism $\mathsf{c}^\ast_A: \mathsf{T}\mathsf{T}^\ast(A) \to \mathsf{T}^\ast \mathsf{T}(A)$ such that $\mathsf{c}^\ast_A \mathsf{p}^\ast_{\mathsf{T}(A)} = \mathsf{T}(\mathsf{p}^\ast_A)$, as required. 
\end{proof}

As mentioned in Remark \ref{rem:reverse_functor}, the reverse tangent bundle does not induce a functor on the base category. However, in differential geometry, while the cotangent bundle is similarly not functorial on all smooth functions, it is functorial on \emph{\'{e}tale} maps \cite[pg. 346]{natural}, which are useful generalizations of isomorphisms. We will now show that the same is true for the reverse tangent bundle in any reverse tangent category. A smooth function $f: M \to N$ between smooth manifolds is \'{e}tale if for any $x \in M$, the tangent space at $x$ is isomorphic to the tangent space at $f(x)$, that is $\mathsf{T}_x(M) \cong \mathsf{T}_{f(x)}(N)$. However, being \'{e}tale can also be characterized in terms of a pullback square, specifically that the naturality square of the tangent bundle projection is a pullback. As such, the notion of an \'{e}tale map can be easily defined in an arbitrary tangent category. 

\begin{definition}
In a tangent category $(\X, \mathbb{T})$, a map $f: A \to B$ is \textbf{\'{e}tale} if its associated $\mathsf{p}$ naturality square: 
    \begin{equation}\label{etale}\begin{gathered} \xymatrixcolsep{5pc}\xymatrix{ \mathsf{T}(A) \ar[r]^-{\mathsf{T}(f)} \ar[d]_-{\mathsf{p}_A} & \mathsf{T}(B) \ar[d]^-{p_B} \\ A \ar[r]_-{f} & B} 
        \end{gathered}\end{equation}
is a pullback. Let $(\X, \mathbb{T})_{\text{\'{e}tale}}$ be the subcategory of \'{e}tale maps of $(\X, \mathbb{T})$. 
\end{definition}

It is straightforward to see that identity maps (and isomorphisms) are \'{e}tale, and that the composition of \'{e}tale maps is again \'{e}tale. So $(\X, \mathbb{T})_{\text{\'{e}tale}}$ is indeed well-defined.  A slightly less obvious result is the following:

\begin{lemma}\label{lem:etale-pb} In a tangent category $(\X, \mathbb{T})$, if a map $f: A \to B$ is \'{e}tale and the square
\[
\xymatrixcolsep{5pc}\xymatrix{X \ar[r]^-{\pi_1} \ar[d]_-{\pi_0} & A \ar[d]^-{f} \\ C \ar[r]_-{g} & B}
\]
is a pullback diagram which is preserved by $\mathsf{T}$, then $\pi_0: X \to C$ is also \'{e}tale.  
\end{lemma}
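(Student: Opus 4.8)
The plan is to prove this by two applications of the pullback pasting lemma, organised around the commuting cube obtained by applying the natural transformation $\mathsf{p}$ to the given pullback square. This cube has the given square on $X, C, A, B$ as its bottom face and the square on $\mathsf{T}(X), \mathsf{T}(C), \mathsf{T}(A), \mathsf{T}(B)$ obtained by applying $\mathsf{T}$ as its top face; the four vertical edges are components of $\mathsf{p}$, and the four side faces are the $\mathsf{p}$-naturality squares of $\pi_0$, $\pi_1$, $f$ and $g$. We must show that the side face belonging to $\pi_0$ is a pullback. At the outset we already know three of the faces are pullbacks: the bottom face (by hypothesis), the top face (since $\mathsf{T}$ preserves the given pullback, again by hypothesis), and the side face belonging to $f$ (since $f$ is \'{e}tale, this is exactly its $\mathsf{p}$-naturality square \eqref{etale}).

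First I would paste the top face, transposed so that its edge $\mathsf{T}(f)\colon \mathsf{T}(A)\to\mathsf{T}(B)$ is its \emph{bottom} edge, on top of the $f$-square (whose \emph{top} edge is also $\mathsf{T}(f)$). Since both of these squares are pullbacks, the pasting lemma shows that the resulting outer rectangle is a pullback; it has top edge $\mathsf{T}(\pi_0)\colon \mathsf{T}(X)\to\mathsf{T}(C)$, bottom edge $f\colon A\to B$, and vertical edges the composites $\mathsf{T}(\pi_1)\mathsf{p}_A$ and $\mathsf{T}(g)\mathsf{p}_B$. By naturality of $\mathsf{p}$ at $\pi_1$ and at $g$, these two composites equal $\mathsf{p}_X\pi_1$ and $\mathsf{p}_C g$ respectively, so this outer rectangle is precisely the square with corners $\mathsf{T}(X), \mathsf{T}(C), A, B$ and edges $\mathsf{T}(\pi_0)$, $\mathsf{p}_X\pi_1$, $\mathsf{p}_C g$, $f$.

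Second, I would observe that this very same rectangle is also obtained by pasting the $\mathsf{p}$-naturality square of $\pi_0$ (with corners $\mathsf{T}(X), \mathsf{T}(C), X, C$) on top of the given pullback square, transposed so that $\pi_0\colon X\to C$ is its top edge and $f\colon A\to B$ is its bottom edge. In this pasting the bottom square (the transposed given pullback) is a pullback, and the outer rectangle is a pullback by the previous paragraph; the pasting lemma, in its cancellation direction, therefore forces the top square --- the $\mathsf{p}$-naturality square of $\pi_0$ --- to be a pullback. Hence $\pi_0\colon X\to C$ is \'{e}tale.

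I do not expect a genuine obstacle; the two points that need care are (i) checking that the two outer rectangles literally coincide, which is exactly where naturality of $\mathsf{p}$ at $\pi_1$ and $g$ is used to rewrite the composite edges, and (ii) invoking the pasting lemma in the correct (cancellation) direction in the second step, for which one really needs the \emph{transposed given square} to be the known pullback being cancelled. One could alternatively give a direct element-free verification of the universal property of the $\pi_0$-naturality square, factoring a test cone through $\mathsf{T}(X)$ using successively that $\mathsf{T}(X)$ is the pullback of $\mathsf{T}(f)$ along $\mathsf{T}(g)$, that $\mathsf{T}(A)$ is the pullback of $f$ along $\mathsf{p}_B$, and that $X$ is the pullback of $f$ along $g$ --- but the pasting argument above is cleaner.
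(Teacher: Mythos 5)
Your proof is correct and is essentially the paper's own argument: both reduce to comparing the two ways of pasting the composite rectangle from $\mathsf{T}(X)$ to $B$ (via the $\pi_0$-naturality square atop the given square, versus via the $\mathsf{T}$-image of the given square atop the étale square for $f$), using naturality of $\mathsf{p}$ at $\pi_1$ and $g$ to identify the two rectangles and the cancellation form of the pasting lemma to conclude. The cube presentation is just a repackaging of the paper's two $1\times 2$ diagrams.
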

\begin{proof}
Consider the diagram
\[
\xymatrixcolsep{5pc}\xymatrix{\mathsf{T}(X) \ar[r]^-{\mathsf{p}_X} \ar[d]_-{\mathsf{T}(\pi_0)} & X \ar[r]^-{\pi_1} \ar[d]^-{\pi_0} & A \ar[d]^-{f} \\ \mathsf{T}(C) \ar[r]_-{\mathsf{p}_C} & C \ar[r]_{g} & B}
\]
We need to show that the left square is a pullback.  However, the right square is a pullback by assumption, so by the pullback pasting lemma, it suffices to show that the outer rectangle is a pullback.  However, by naturality of $\mathsf{p}$, the outer rectangle can be rewritten as 
\[
\xymatrixcolsep{5pc}\xymatrix{\mathsf{T}(X) \ar[r]^-{\mathsf{T}(\pi_1)} \ar[d]_-{\mathsf{T}(\pi_0)} & \mathsf{T}(A) \ar[r]^-{\mathsf{p}_A} \ar[d]^-{\mathsf{T}(f)} & A \ar[d]^-{f} \\ \mathsf{T}(C) \ar[r]_-{\mathsf{T}(g)} & \mathsf{T}(B) \ar[r]_{\mathsf{p}_B} & B
}
\]
This is a pullback since the left square is a pullback by assumption and the right square is a pullback since $f$ is \'{e}tale. Thus the left square in the first diagram is a pullback, as required.   
\end{proof}

In order to show that the reverse tangent bundle gives a functor on the subcategory of \'{e}tale maps, we will also need the following useful observation: 

\begin{lemma}\label{lem:cart-useful} Let $\D$ be a system of differential bundles on a tangent category $(\mathbb{X}, \mathbb{T})$, and suppose that $(f: A \to A^\prime, g: A \times_{A^\prime} E^\prime \to E): \mathcal{E} \to \mathcal{E}^\prime$ is a Cartesian map in $\mathsf{DBun}^\circ_\mathcal{D}\left[(\mathbb{X}, \mathbb{T}) \right]$. Then $g$ is an isomorphism, so $A \times_{A^\prime} E^\prime \cong E$, and $(f: A \to A^\prime, g^{-1}\pi_1: E \to E^\prime): \mathcal{E} \to \mathcal{E}^\prime$ is a Cartesian map in $\mathsf{DBun}_\mathcal{D}\left[(\mathbb{X}, \mathbb{T}) \right]$. 
\end{lemma}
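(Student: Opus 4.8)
The plan is to reduce the statement to the standard description of the Cartesian morphisms of a dual fibration. The first step is to notice that a morphism $(f,g) : \mathcal{E} \to \mathcal{E}'$ of $\mathsf{DBun}^\circ_\mathcal{D}[(\mathbb{X},\mathbb{T})]$ over $f : A \to A^\prime$ packages exactly two pieces of data: the pullback morphism $A \times_{A^\prime} \mathcal{E}' \to \mathcal{E}'$ of Proposition \ref{prop:dbun-pb} (whose domain lies in $\mathcal{D}$ since $\mathcal{D}$ is closed under pullback), which is Cartesian for $\mathsf{U}_\mathcal{D}$ being a pullback square of differential bundles, together with the vertical morphism $g : A \times_{A^\prime} \mathcal{E}' \to \mathcal{E}$ of $\mathsf{DBun}_\mathcal{D}[A]$. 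In the span presentation of the dual fibration from \cite{jacobs1999categorical}, $(f,g)$ is then the equivalence class whose Cartesian leg is this pullback morphism and whose vertical leg is $g$, and it is a standard fact about dual fibrations (see \cite[Sec 1.10]{jacobs1999categorical}) that such a class is Cartesian for $\mathsf{U}^\circ_\mathcal{D}$ precisely when its vertical leg is an isomorphism. I would either cite this directly or re-derive it from the explicit composition formula (\ref{dual-comp}).

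Granting this, the rest is short. Since $(f,g)$ is assumed Cartesian, $g$ is an isomorphism in the fibre $\mathsf{DBun}_\mathcal{D}[A]$, i.e. an isomorphism of differential bundles over $A$; in particular the underlying map $g : A \times_{A^\prime} E^\prime \to E$ is invertible in $\mathbb{X}$, giving $A \times_{A^\prime} E^\prime \cong E$. Its inverse $g^{-1} : \mathcal{E} \to A \times_{A^\prime} \mathcal{E}'$ is again a linear $A$-differential bundle morphism — compatibility with the projections is automatic once $g$ is invertible, and compatibility with the lifts follows from the lift square of (\ref{Adiffbunmap}) for $g$ together with functoriality of $\mathsf{T}$ — hence in particular a vertical, and therefore Cartesian, morphism of $\mathsf{DBun}_\mathcal{D}[(\mathbb{X},\mathbb{T})]$. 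Composing $g^{-1}$ with the (Cartesian) pullback morphism $A \times_{A^\prime} \mathcal{E}' \to \mathcal{E}'$ and using that Cartesian morphisms compose, the morphism $(f, g^{-1}\pi_1) : \mathcal{E} \to \mathcal{E}'$ — with base component $f$ and total component $E \xrightarrow{g^{-1}} A \times_{A^\prime} E^\prime \xrightarrow{\pi_1} E^\prime$ — is Cartesian for $\mathsf{U}_\mathcal{D}$, which is exactly the claim.

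The only delicate point, and the one I expect to be the main obstacle, is the first step: extracting invertibility of $g$ from ``$(f,g)$ is Cartesian''. If one prefers to stay inside the paper's explicit presentation of $\mathsf{DBun}^\circ_\mathcal{D}[(\mathbb{X},\mathbb{T})]$ in Proposition \ref{prop:dbundual} rather than appeal to the abstract theory, the route is: apply the universal property of the Cartesian morphism $(f,g)$ to the morphism $(f, 1_{A \times_{A^\prime} E^\prime}) : A \times_{A^\prime} \mathcal{E}' \to \mathcal{E}'$, which lies over $f = 1_A f$, to obtain a unique vertical $(1_A, \ell) : A \times_{A^\prime} \mathcal{E}' \to \mathcal{E}$ with $(1_A, \ell)(f,g) = (f, 1_{A \times_{A^\prime} E^\prime})$, and then expand the composition (\ref{dual-comp}) to read off that $\ell$ is a two-sided inverse of $g$ modulo the canonical isomorphism $A \times_A E \cong E$. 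Keeping track of the pullback projections and of these canonical isomorphisms $A \times_A (-) \cong (-)$, and matching the paper's ``mixed'' composition up against the span picture, is conceptually routine but combinatorially the fiddly part; everything else is formal (a Cartesian arrow of a dual fibration has invertible vertical part, and Cartesian arrows compose).
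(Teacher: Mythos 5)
Your proposal is correct and follows essentially the same route as the paper's proof: identify $(f,g)$ with the span whose vertical leg is $g$ and whose Cartesian leg is the pullback projection, invoke the standard fact that such a class is Cartesian in the dual fibration precisely when the vertical leg is invertible, and then rewrite the class as $[(1,g^{-1}\pi_1)]$ to conclude that $(f,g^{-1}\pi_1)$ is Cartesian in $\mathsf{DBun}_\mathcal{D}\left[(\mathbb{X},\mathbb{T})\right]$. The paper cites this characterization of Cartesian arrows in the dual fibration directly rather than re-deriving it, exactly as your primary route does.
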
 
\begin{proof} Cartesian maps in the dual fibration correspond to Cartesian maps in the starting fibration \cite[Prop. 3.2]{dual_fibration}. In general, an equivalence class $[(v,c)]$ is Cartesian in the dual fibration if and only if the vertical component $v$ is an isomorphism. As such, we have that $[(v,c)] = [(1,v^{-1}c)]$, where in particular, $v^{-1}c$ is a Cartesian map in the starting fibration. Thus, the desired result is obtained by translating this in terms of $\mathsf{DBun}_\mathcal{D}\left[(\mathbb{X}, \mathbb{T}) \right]$. So if $(f,g)$ is Cartesian in $\mathsf{DBun}^\circ_\mathcal{D}\left[(\mathbb{X}, \mathbb{T}) \right]$, then its associated representation as an equivalence class is $[ (1_A,g), (f,\pi_1) ]$. This implies that $(1_A, g)$ is an isomorphism in $\mathsf{DBun}_\mathcal{D}\left[(\mathbb{X}, \mathbb{T}) \right]$, which in turn implies that $g$ is an isomorphism. Then $[ (1_A,g), (f,\pi_1) ] = [ (1_A, 1_E), (f, g^{-1} \pi_1) ]$, and in particular $(f,g^{-1} \pi_1)$ is a Cartesian map in $\mathsf{DBun}_\mathcal{D}\left[(\mathbb{X}, \mathbb{T}) \right]$. 
\end{proof}

We can now prove our main result about \'{e}tale maps:

\begin{proposition}\label{prop:etale} (Functoriality of $\mathsf{T}^\ast$ on \'{e}tale maps) In a reverse tangent category $(\mathbb{X}, \mathbb{T}, \mathcal{D}, \ast, \iota)$, if $f: A \to B$ is \'{e}tale, then $\mathsf{T}^\ast(f): A \times_B \mathsf{T}^\ast(B) \to \mathsf{T}^\ast(A)$ is an isomorphism. Furthermore, there is an endofunctor $\widehat{\mathsf{T}^\ast}: (\X, \mathbb{T})_{\text{\'{e}tale}} \to (\X, \mathbb{T})_{\text{\'{e}tale}}$ which maps an object $A$ to its reverse tangent bundle $\mathsf{T}^\ast(A)$, and an \'{e}tale map $f: A \to B$ to the map $\widehat{\mathsf{T}^\ast}(f): \mathsf{T}^\ast(A) \to \mathsf{T}^\ast(B)$ which is defined as the composite $\widehat{\mathsf{T}^\ast}(f) := \mathsf{T}^\ast(f)^{-1} \pi_1$. 
\end{proposition}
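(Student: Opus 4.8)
The plan is to use the factorization $\mathcal{T}^\ast = (-)^\ast\circ\mathcal{T}$ from (\ref{revtanfun}), together with the fact (from the proof that $\mathsf{U}_\mathcal{D}$ is a fibration) that the Cartesian morphisms of $\mathsf{DBun}_\mathcal{D}\left[(\mathbb{X}, \mathbb{T}) \right]$ are exactly the pullback squares of differential bundles. First I would observe that $f: A \to B$ is \'{e}tale precisely when the square (\ref{etale}) is a pullback, which says exactly that $\mathcal{T}(f) = (f,\mathsf{T}(f)): \mathcal{T}(A)\to\mathcal{T}(B)$ is Cartesian in $\mathsf{DBun}_\mathcal{D}\left[(\mathbb{X}, \mathbb{T}) \right]$. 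Since $(-)^\ast$ is a fibration morphism it preserves Cartesian maps, so $\mathcal{T}^\ast(f) = \mathcal{T}(f)^\ast = (f,\mathsf{T}^\ast(f))$ is Cartesian in $\mathsf{DBun}^\circ_\mathcal{D}\left[(\mathbb{X}, \mathbb{T}) \right]$. Lemma \ref{lem:cart-useful} now yields at once that $\mathsf{T}^\ast(f): A\times_B\mathsf{T}^\ast(B)\to\mathsf{T}^\ast(A)$ is an isomorphism -- establishing the first claim -- and that $\left(f,\ \mathsf{T}^\ast(f)^{-1}\pi_1\right): \mathcal{T}^\ast(A)\to\mathcal{T}^\ast(B)$ is a Cartesian morphism of $\mathsf{DBun}_\mathcal{D}\left[(\mathbb{X}, \mathbb{T}) \right]$, i.e.\ the square with vertical legs $\mathsf{p}^\ast_A, \mathsf{p}^\ast_B$, bottom edge $f$, and top edge $\widehat{\mathsf{T}^\ast}(f) := \mathsf{T}^\ast(f)^{-1}\pi_1$ is a pullback.

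Next I would check that $\widehat{\mathsf{T}^\ast}$ indeed takes values in $(\X, \mathbb{T})_{\text{\'{e}tale}}$. Because $\mathcal{T}^\ast(B) = \mathcal{T}(B)^\ast$ lies in $\mathcal{D}$ (the codomain of $(-)^\ast$ is $\mathsf{DBun}^\circ_\mathcal{D}$, whose objects are differential bundles in $\mathcal{D}$), the pullback square just produced is, up to the isomorphism $\mathsf{T}^\ast(f)$, the pullback of $\mathsf{p}^\ast_B$ along $f$, hence preserved by every $\mathsf{T}^m$ by Definition \ref{def:systemdbun}(iii) and Proposition \ref{prop:dbun-pb}. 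Transposing that square so that the \'{e}tale map $f$ lies on a horizontal edge, Lemma \ref{lem:etale-pb} applies and its conclusion is exactly that $\widehat{\mathsf{T}^\ast}(f): \mathsf{T}^\ast(A)\to\mathsf{T}^\ast(B)$ is \'{e}tale.

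It then remains to verify functoriality. Preservation of identities is immediate: $\mathcal{T}^\ast(1_A) = 1_{\mathcal{T}^\ast(A)}$, whose second component in the dual fibration is $\pi_1: A\times_A\mathsf{T}^\ast(A)\to\mathsf{T}^\ast(A)$ by Proposition \ref{prop:dbundual}(iii), and this $\pi_1$ is the canonical isomorphism exhibiting a pullback along an identity, so $\widehat{\mathsf{T}^\ast}(1_A) = \pi_1^{-1}\pi_1 = 1_{\mathsf{T}^\ast(A)}$. For composition, I would present $\widehat{\mathsf{T}^\ast}$ as a composite of functors: the corestriction $\mathcal{T}^\ast: (\X, \mathbb{T})_{\text{\'{e}tale}}\to \mathsf{Cart}\!\left(\mathsf{DBun}^\circ_\mathcal{D}\left[(\mathbb{X}, \mathbb{T}) \right]\right)$ (well defined by the first paragraph), the correspondence $\mathsf{Cart}\!\left(\mathsf{DBun}^\circ_\mathcal{D}\right)\to\mathsf{Cart}\!\left(\mathsf{DBun}_\mathcal{D}\right)$, $(f,g)\mapsto(f,g^{-1}\pi_1)$, which is a functor by \cite[Prop. 3.2]{dual_fibration} (the result underlying Lemma \ref{lem:cart-useful}), and the forgetful functor $\mathsf{DBun}_\mathcal{D}\left[(\mathbb{X}, \mathbb{T}) \right]\to\mathbb{X}$ sending a linear differential bundle morphism to its total-space component; by the second paragraph this composite corestricts to $(\X, \mathbb{T})_{\text{\'{e}tale}}$, and it is $\widehat{\mathsf{T}^\ast}$. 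Alternatively, one can verify $\widehat{\mathsf{T}^\ast}(fh) = \widehat{\mathsf{T}^\ast}(f)\,\widehat{\mathsf{T}^\ast}(h)$ directly: from $\mathcal{T}^\ast(fh) = \mathcal{T}^\ast(f)\,\mathcal{T}^\ast(h)$ and the composition formula (\ref{dual-comp}) one gets $\mathsf{T}^\ast(fh) = \left\langle 1_A,\ \left(f\times_B 1_{\mathsf{T}^\ast(C)}\right)\mathsf{T}^\ast(h)\right\rangle\mathsf{T}^\ast(f)$; inverting, post-composing with $\pi_1$, and using the pairing identities reduces the claim to $\pi_1 = \left(f\times_B 1_{\mathsf{T}^\ast(C)}\right)\pi_1$.

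The step I expect to be the main obstacle is this last one: reconciling the ``mixed'' forward/reverse shape of composition in the dual fibration -- the $k$ appearing before the $g$ in (\ref{dual-comp}) -- with the ordinary, order-reversing composition of the inverses $\mathsf{T}^\ast(f)^{-1}$ and $\mathsf{T}^\ast(h)^{-1}$. Whether handled via the bookkeeping of the explicit computation or, more cleanly, via functoriality of the Cartesian-morphism correspondence, it requires some care; a secondary point to watch is keeping track of which pullback squares are stable under $\mathsf{T}$, since that is what makes Lemma \ref{lem:etale-pb} applicable.
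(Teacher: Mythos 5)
Your proposal is correct and follows essentially the same route as the paper's proof: \'{e}taleness of $f$ gives a Cartesian map $(f,\mathsf{T}(f))$, the fibration morphism $(-)^\ast$ preserves Cartesian maps, Lemma \ref{lem:cart-useful} yields the isomorphism, and Lemma \ref{lem:etale-pb} gives \'{e}taleness of $\widehat{\mathsf{T}^\ast}(f)$. Your treatment of functoriality is more detailed than the paper's (which leaves it as ``easy to check''), and your attention to $\mathsf{T}$-stability of the relevant pullback is a welcome extra precision, but the argument is the same.
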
\label{etale_functor}
\begin{proof} Since $f$ is \'{e}tale, $(f, \mathsf{T}(f))$ is a pullback square, and hence Cartesian in $\mathsf{DBun}_\mathcal{D}\left[(\mathbb{X}, \mathbb{T}) \right]$. Since the involution $(-)^\ast$ is a fibration morphism, it sends Cartesian maps to Cartesian maps. As such, $(f, \mathsf{T}(f))^\ast= (f, \mathsf{T}^\ast(f))$ is Cartesian in $\mathsf{DBun}^\circ_\mathcal{D}\left[(\mathbb{X}, \mathbb{T}) \right]$. By Lemma \ref{lem:cart-useful}, it follows that $\mathsf{T}^\ast(f)$ is an isomorphism.  Thus $\mathsf{T}^\ast(f)^{-1}$ is also an isomorphism and hence \'{e}tale, and by Lemma \ref{lem:etale-pb}, $\pi_1: A \times_B \mathsf{T}^\ast(B) \to \mathsf{T}^\ast(B)$ is also \'{e}tale since it is a pullback of the \'{e}tale map $f$. Hence $\widehat{\mathsf{T}^\ast}(f) := \mathsf{T}^\ast(f)^{-1} \pi_1$ is itself \'{e}tale, as it is a composite of \'{e}tale maps.  It is then easy to check this assignment is functorial.    
\end{proof}

In Ex \ref{ex:CRDC}, we explained how every Cartesian reverse differential category is a reverse tangent category. We conclude this section by going in the opposite direction. Looking at the forward side of the story, to do so we must work with Cartesian tangent categories \cite[Def 2.8]{cockett2014differential}, which are tangent categories with finite products that are preserved by the tangent bundle functor $\mathsf{T}$. Then to extract a Cartesian differential category from a Cartesian tangent category, one looks at the \emph{differential objects} \cite[Sec 3]{cockett2018differential}, which are the differential bundles over the terminal object $\mathsf{1}$. In particular, a differential object $A$ has the property that $\mathsf{T}(A) \cong A \times A$. Then the full subcategory $\mathsf{DO}$ of differential objects and \emph{all} maps between them is a Cartesian differential category \cite[Thm 4.11]{cockett2014differential}, where the differential combinator is defined on a map $f: A \to B$ as follows: 
    \begin{equation}\label{diff-do}\begin{gathered} \mathsf{D}[f] := \xymatrixcolsep{5pc}\xymatrix{ A \times A \cong \mathsf{T}(A) \ar[r]^-{ \mathsf{T}(f)} &  \mathsf{T}(B) \cong B \times B \ar[r]^-{\pi_1} & B} 
    \end{gathered}\end{equation}
As differential objects give a Cartesian differential category, to obtain a Cartesian reverse differential category, we need only give a linear dagger, which will be built using the linear involution. First note that for a Cartesian tangent category, for every object $A$, $\mathsf{DBun}[A]$ has finite biproducts. Now one of the axioms of a linear dagger is that the fibres of the linear fibration have dagger biproducts. Thus, we must ask that the involution preserves this biproduct structure: 

\begin{definition} A \textbf{Cartesian reverse tangent category} is a reverse tangent category $(\mathbb{X}, \mathbb{T}, \mathcal{D}, \ast, \iota)$ such that $(\mathbb{X}, \mathbb{T})$ is a Cartesian tangent category, $\mathcal{D}$ is closed under products, and for each object $A$, the induced involution $(-)^\ast: \mathsf{DBun}^{op}_\mathcal{D}\left[A\right] \to \mathsf{DBun}_\mathcal{D}\left[A\right]$ preserves the biproduct structure. 
\end{definition}

In a Cartesian reverse differential category, since the dagger is the identity on objects, we have that the differential bundles in our chosen system are in fact self-dual. So to build a Cartesian reverse differential category from a Cartesian reverse tangent category, we consider the differential objects which are also self-dual. 

\begin{definition} In a Cartesian reverse tangent category $(\mathbb{X}, \mathbb{T}, \mathcal{D}, \ast, \iota)$, a \textbf{self-dual differential object} is a differential object $A$ in $\mathcal{D}$ which comes equipped with a linear isomorphism $A \cong A^\ast$. Let $\mathsf{DO}_{\text{sd}}$ be the full subcategory of self-dual differential objects and all maps between them. 
\end{definition}

\begin{lemma} In a Cartesian reverse tangent category $(\mathbb{X}, \mathbb{T}, \mathcal{D}, \ast, \iota)$, if $A$ is a $\ast$-self-dual differential object, then $\mathsf{T}^\ast(A) \cong A \times A \cong \mathsf{T}(A)$. 
\end{lemma}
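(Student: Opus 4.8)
The plan is to identify the reverse tangent bundle $\mathcal{T}^\ast(A)$ with a pullback of the dual of ``$A$ regarded as a differential object'', and then to invoke self-duality. Write $\mathcal{A} = (\mathsf{q}_A : A \to \mathsf{1}, \sigma, \zeta, \lambda)$ for $A$ viewed as a differential bundle over the terminal object $\mathsf{1}$, i.e. as a differential object; since $A \in \mathcal{D}$ this bundle lies in $\mathcal{D}$, and since $(\mathbb{X},\mathbb{T})$ is a Cartesian tangent category (so $\mathsf{T}$ preserves finite products and $\mathsf{1}$ is terminal) all the pullbacks over $\mathsf{1}$ used below exist and are preserved by $\mathsf{T}$. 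The first fact I would record is the bundle-level form of the isomorphism $\mathsf{T}(A) \cong A \times A$: for a differential object $A$, the tangent bundle $\mathcal{T}(A)$, viewed as a differential bundle over $A$, is (isomorphic to) the pullback bundle $A \times_{\mathsf{1}} \mathcal{A}$ along the terminal map $!_A : A \to \mathsf{1}$, which is the trivial bundle $(\pi_0 : A \times A \to A, \dots)$; this is standard in the theory of differential objects (see \cite{cockett2014differential, cockett2018differential}). In particular this already gives the isomorphism $A \times A \cong \mathsf{T}(A)$ appearing on the right of the lemma.

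Next I would exploit that the involution $(-)^\ast$ is a fibration morphism, hence preserves Cartesian maps. By Prop \ref{prop:dbun-pb}, the pullback bundle $A \times_{\mathsf{1}} \mathcal{A}$ carries a canonical Cartesian morphism to $\mathcal{A}$ lying over $!_A$ in $\mathsf{DBun}_\mathcal{D}\left[(\mathbb{X}, \mathbb{T}) \right]$. Applying $(-)^\ast$, together with the identification $\mathcal{T}(A) \cong A \times_{\mathsf{1}} \mathcal{A}$ from the previous paragraph, produces a Cartesian morphism in the dual fibration $\mathsf{DBun}^\circ_\mathcal{D}\left[(\mathbb{X}, \mathbb{T}) \right]$ of the form $(!_A, g) : \mathcal{T}^\ast(A) \to \mathcal{A}^\ast$, whose second component is a map $g : A \times_{\mathsf{1}} A^\ast \to \mathsf{T}^\ast(A)$, where $A^\ast$ denotes the total space of the dual bundle $\mathcal{A}^\ast$. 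By Lemma \ref{lem:cart-useful} the map $g$ is an isomorphism, and so $\mathsf{T}^\ast(A) \cong A \times_{\mathsf{1}} A^\ast \cong A \times A^\ast$; moreover this isomorphism is one of differential bundles over $A$, since it comes from a Cartesian arrow.

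To conclude, I would invoke the self-duality hypothesis: by definition a self-dual differential object comes equipped with a linear isomorphism $A \cong A^\ast$, so $A \times A^\ast \cong A \times A$, giving $\mathsf{T}^\ast(A) \cong A \times A$; combining this with $A \times A \cong \mathsf{T}(A)$ from the first step yields exactly $\mathsf{T}^\ast(A) \cong A \times A \cong \mathsf{T}(A)$. The step I expect to require the most care is the middle one: correctly tracking in which category each morphism lives when passing through the dual fibration (whose composition law (\ref{dual-comp}) mixes a forward and a reverse composite), and in particular making precise that applying the fibration morphism $(-)^\ast$ to the canonical Cartesian lift of $!_A$ again lands on a Cartesian arrow over $!_A$ with codomain $\mathcal{A}^\ast$. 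Lemma \ref{lem:cart-useful} is precisely the tool that converts that Cartesian arrow back into an honest isomorphism of total spaces, so once this bookkeeping is set up nothing computational remains; the differential-object facts and the reindexing of the self-duality isomorphism along $!_A$ are routine.
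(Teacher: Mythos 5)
Your proof is correct, but it takes a genuinely different route from the one in the paper. The paper's argument is a one-liner: regarding $A$ as a differential bundle over the terminal object, it invokes the compatibility square (\ref{T*-commute}) between $(-)^\ast$ and the tangent functors to conclude $\mathsf{T}^\ast(A) \cong \mathsf{T}(A^\ast) \cong \mathsf{T}(A) \cong A \times A$, with self-duality supplying the middle isomorphism. You instead never touch (\ref{T*-commute}): you trivialize $\mathcal{T}(A)$ as the pullback bundle $A \times_{\mathsf{1}} \mathcal{A}$ along $!_A$, use that the fibration morphism $(-)^\ast$ preserves Cartesian arrows to dualize the canonical Cartesian lift from Prop \ref{prop:dbun-pb}, and then apply Lemma \ref{lem:cart-useful} to extract an honest isomorphism $\mathsf{T}^\ast(A) \cong A \times A^\ast$ of bundles over $A$, finishing with self-duality. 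What your route buys is precisely the base-change bookkeeping you flag as delicate: the paper's appeal to (\ref{T*-commute}) computes the dual of $\overline{\mathsf{T}}(\mathcal{A})$, a bundle over $\mathsf{T}(\mathsf{1}) \cong \mathsf{1}$, whereas $\mathsf{T}^\ast(A)$ is by definition the total space of the dual of $\mathcal{T}(A)$, a bundle over $A$; identifying these two duals is exactly the content of your pullback argument, so your proof makes explicit a step the paper elides. The cost is length, and a reliance on the (standard, but uncited in this paper) fact that the tangent bundle of a differential object is the pullback of $\mathcal{A}$ along $!_A$ as a differential bundle over $A$. Both arguments use the same two external inputs, namely $\mathsf{T}(A) \cong A \times A$ and the linear isomorphism $A \cong A^\ast$.
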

\begin{proof} Since $A$ is a differential object and thus a differential bundle, by applying (\ref{T*-commute}), we get that $\mathsf{T}^\ast(A) \cong \mathsf{T}(A^\ast) \cong \mathsf{T}(A) \cong A \times A$. 
\end{proof}

We can now explain how the full subcategory of self-dual differential objects is a Cartesian reverse differential category. 

\begin{proposition}\label{prop:CRTC-CRDC} For a Cartesian reverse tangent category $(\mathbb{X}, \mathbb{T}, \mathcal{D}, \ast, \iota)$, $\mathsf{DO}_{\text{sd}}$ is a Cartesian reverse differential category where the reverse differential combinator $\mathsf{R}$ is defined on a map $f: A \to B$ as the following composite: 
    \begin{equation}\label{rev-do}\begin{gathered} \mathsf{R}[f] := \xymatrixcolsep{5pc}\xymatrix{ A \times B \cong A \times_{B}  \mathsf{T}^\ast(B) \ar[r]^-{ \mathsf{T}^\ast(f)} &  \mathsf{T}^\ast(A) \cong A \times A \ar[r]^-{\pi_1} & A } 
    \end{gathered}\end{equation}
\end{proposition}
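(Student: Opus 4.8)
The plan is to use the characterization \cite[Thm 42]{reverse} of Cartesian reverse differential categories as Cartesian differential categories equipped with a linear dagger. Thus there are three things to do: show $\mathsf{DO}_{\text{sd}}$ is a Cartesian differential category, equip it with a linear dagger built from the linear involution $(\ast,\iota)$, and check that the resulting reverse differential combinator is the composite (\ref{rev-do}).

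For the first, recall from \cite[Thm 4.11]{cockett2014differential} that the full subcategory $\mathsf{DO}$ of differential objects is a Cartesian differential category with combinator (\ref{diff-do}). It suffices to check that $\mathsf{DO}_{\text{sd}}$ is closed under the relevant structure: the terminal object is self-dual, and if $A,B$ are self-dual differential objects then so is $A \times B$, since $\mathcal{D}$ is closed under products, $\mathsf{T}$ preserves products (Cartesian tangent category), and the fibrewise involution preserves biproducts, giving a linear isomorphism $(A \times B)^\ast \cong A^\ast \times B^\ast \cong A \times B$. As a full subcategory closed under finite products on which (\ref{diff-do}) restricts, $\mathsf{DO}_{\text{sd}}$ is itself a Cartesian differential category. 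For the linear dagger, I would identify the linear fibration $\mathcal{L}[\mathsf{DO}_{\text{sd}}]$ with the sub-fibration of $\mathsf{DBun}_\mathcal{D}[(\mathbb{X},\mathbb{T})]$ spanned by the product bundles $\pi_0 : C \times A \to C$ with $C, A$ self-dual differential objects, exactly as in Ex \ref{ex:P}: such a linear differential bundle morphism is precisely a pair $(f : C \to D, g : C \times A \to B)$ with $g$ linear in its second argument. Since the linear involution is a fibration morphism it commutes with reindexing, so it sends $\pi_0 : C \times A \to C$ to (the dual bundle of) $C \times A^\ast \cong C \times A$, using the self-dual isomorphism $A \cong A^\ast$; hence it restricts to this sub-fibration, and transporting across the identification and conjugating by the chosen self-dual isomorphisms yields a fibration morphism $(-)^\dagger : \mathcal{L}[\mathsf{DO}_{\text{sd}}]^\circ \to \mathcal{L}[\mathsf{DO}_{\text{sd}}]$ that is the identity on objects. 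Involutivity of $(-)^\dagger$ comes from the natural isomorphism $\iota$ of Def \ref{definition:involution} together with the self-dual isomorphisms, and each fibre of the linear fibration having dagger biproducts is exactly the standing hypothesis that $(-)^\ast : \mathsf{DBun}^{op}_\mathcal{D}[A] \to \mathsf{DBun}_\mathcal{D}[A]$ preserves the biproduct structure. By \cite[Thm 42]{reverse}, $\mathsf{DO}_{\text{sd}}$ is then a Cartesian reverse differential category.

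Finally, the reverse differential combinator associated to a linear dagger is $\mathsf{R}[f] = \mathsf{D}[f]^\dagger$; since $\mathsf{D}[f]$ is built from $\mathsf{T}(f)$ via (\ref{diff-do}) and $(-)^\dagger$ is the transport of $(-)^\ast$ with $\mathsf{T}(f)^\ast = \mathsf{T}^\ast(f)$, unwinding the identifications shows $\mathsf{R}[f]$ is exactly the composite (\ref{rev-do}). I expect the main obstacle to be the bookkeeping in the middle step: rigidifying the non-identity-on-objects involution $(-)^\ast$ into a genuine identity-on-objects, strictly involutive linear dagger using the chosen self-dual isomorphisms and $\iota$ coherently, and confirming the dagger-biproduct axiom transports correctly across the identification with $\mathcal{L}[\mathsf{DO}_{\text{sd}}]$; the remaining verifications are routine diagram chases.
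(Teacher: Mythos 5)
Your proposal is correct and follows essentially the same route as the paper: both reduce to the linear-dagger characterization of Cartesian reverse differential categories, build the dagger by applying the involution $(-)^\ast$ to $\langle \pi_0, f\rangle$ viewed as a $C$-linear morphism of product differential bundles and conjugating by the self-duality isomorphisms, and then identify $\mathsf{R}[f]=\mathsf{D}[f]^\dagger$ with the composite (\ref{rev-do}). If anything, you are more explicit than the paper about closure of $\mathsf{DO}_{\text{sd}}$ under products and about where the dagger-biproduct axiom comes from; the paper simply performs the same dagger construction directly on maps and declares the remaining verifications straightforward.
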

\begin{proof} Since $\mathsf{DO}_{\text{sd}}$ is a full subcategory of $\mathsf{DO}$, it is clear that $\mathsf{DO}_{\text{sd}}$ is also a Cartesian differential category with the same structure as $\mathsf{DO}$. So it remains to construct a linear dagger for $\mathsf{DO}_{\text{sd}}$. To do so, given a map $f: C \times A \to B$ which is linear in $A$, we must give a map $f^\dagger: C \times B \to A$ which is linear in $B$. Now $C \times A$ and $C \times B$ are differential bundles over $C$ and $\langle \pi_0, f \rangle: C \times A \to C \times B$ is a map in $\mathsf{DBun}_\mathcal{D}\left[C\right]$. Applying the involution $(-)^\ast: \mathsf{DBun}^{op}_\mathcal{D}\left[A\right] \to \mathsf{DBun}_\mathcal{D}\left[A\right]$, we obtain a map $(\langle \pi_0, f \rangle)^\ast: C \times B^\ast \to C \times A^\ast$. Then define $f^\dagger$ as the following composite: 
  \begin{equation}\label{fdag-def}\begin{gathered} f^\dagger := \xymatrixcolsep{5pc}\xymatrix{ C \times B \cong C \times B^\ast \ar[r]^-{ (\langle \pi_0, f \rangle)^\ast} &  C \times A^\ast \cong C \times A \ar[r]^-{\pi_1} & A } 
    \end{gathered}\end{equation}
This map is linear in $B$ since $(\langle \pi_0, f \rangle)^\ast$ is a $C$-linear differential bundle morphism. It is straightforward to check that this induces a linear dagger $\dagger$ on $\mathsf{DO}_{\text{sd}}$. So we conclude that $\mathsf{DO}_{\text{sd}}$ is a Cartesian reverse differential combinator, and the reverse differential combinator defined in (\ref{rev-do}) is precisely the dagger of the differential combinator defined in (\ref{diff-do}).
\end{proof}

\begin{example} $(\mathsf{SMAN}, \mathbb{T}, \mathcal{D}, \ast, \iota)$ is a Cartesian reverse tangent category, and its $\ast$-self-dual differential objects are precisely the Euclidean spaces $\mathbb{R}^n$. As such, the resulting Cartesian reverse differential category is precisely $\mathsf{SMOOTH}$. 
\end{example}

\section{Future Work}\label{sec:future_work}

One of the next major steps for reverse tangent categories is to apply these ideas to categorically study gradient-based learning and automatic differentiation on smooth manifolds. However, there are several other ways this work could be expanded upon: 
\begin{enumerate}[{\em (i)}]
    \item We have chosen to \emph{define} a reverse tangent category as a tangent category with a certain kind of involution.  However, this was not how Cartesian reverse differential categories were defined.  Cartesian reverse differential categories were defined directly in terms of a reverse differential combinator $\mathsf{R}$, and then shown to be equivalent to a Cartesian differential category with a certain kind of involution. It would be useful to have a direct description of a reverse tangent category in a similar fashion, that is, a structure involving a ``reverse tangent bundle'' functor from the base category to an appropriately defined category of differential bundles.  
    \item There is much more theoretical work that can be explored in an arbitrary reverse tangent category, especially by taking inspiration from results about the cotangent bundle in differential geometry.  For example, a pseudo-Riemannian structure on a manifold can be defined as an isomorphism between its tangent bundle and its cotangent bundle; thus, one could similarly explore what can be done with such objects in an arbitrary reverse tangent category.
    \item There are many ways in which tangent categories can be generated from existing ones. For example, the category of vector fields of a tangent category is again a tangent category \cite[Prop 2.10]{diff_eqns}.  It would be interesting to see how many of these constructions apply to reverse tangent categories, thus giving many more examples of this structure.  
\end{enumerate}

\bibliography{reverse_bib}  
\end{document}